\newtheorem{theorem}{Theorem}[section]
\newtheorem{conjecture}[theorem]{Conjecture}
\newtheorem{lemma}[theorem]{Lemma}
\theoremstyle{definition}
\newtheorem{definition}[theorem]{Definition}
\newtheorem{construction}[theorem]{Construction}
\theoremstyle{remark}
\newtheorem{remark}[theorem]{Remark}
\numberwithin{equation}{section}
\newcommand{\C}{{\mathbb{C}}}
\newcommand{\R}{{\mathbb{R}}}
\newcommand{\Q}{{\mathbb{Q}}}
\newcommand{\Z}{{\mathbb{Z}}}
\renewcommand{\epsilon}{\varepsilon}
\renewcommand{\theta}{\vartheta}
\renewcommand{\S}{S}
\newcommand{\w}{\wedge}
\newcommand{\Dehn}{\tau}
\DeclareMathOperator{\im}{Im}
\DeclareMathOperator{\lk}{lk}
\DeclareMathOperator{\sympl}{sympl}
\DeclareMathOperator{\re}{Re}
\begin{document}
\title[Contact homology of left-handed stabilizations]{Contact homology of left-handed stabilizations and plumbing of open books}

\author{Frederic Bourgeois}
\address{(F.~Bourgeois) 
D\'epartement de Math\'ematiques, Universit\'e Libre de Bruxelles, CP 218, Boulevard du Triomphe, 1050 Bruxelles, Belgium
}
\email{fbourgeo@ulb.ac.be}
\author{Otto van Koert}
\address{(O.~van Koert)
Department of Mathematics, Hokkaido University, Kita 10 Nishi 8, 
Sapporo 060-0810, Japan
}
\email{okoert@math.sci.hokudai.ac.jp}
\subjclass{Primary 53D35, 57R17}

\keywords{Contact homology, Dehn twists}

\begin{abstract}
We show that on any closed contact manifold of dimension greater than 1 a contact structure with vanishing contact homology can be constructed. The basic idea for the construction comes from Giroux. We use a special open book decomposition for spheres. The page is the cotangent bundle of a sphere and the monodromy is given by a left-handed Dehn twist. In the resulting contact manifold we exhibit a closed Reeb orbit that bounds a single finite energy plane. As a result, the unit element of the contact homology algebra is exact and so the contact homology vanishes. This result can be extended to other contact manifolds by using connected sums. The latter is related to the plumbing- or $2$-Murasugi sum of the contact open books. We shall give a possible description of this construction and some conjectures about the plumbing operation.
\end{abstract}

\maketitle

\section{Introduction}
A few years ago Giroux established a correspondence between open books and contact structures on manifolds. In dimension $3$ this correspondence showed that overtwisted contact manifolds need to have left-handed Dehn twists in the monodromy of a supporting open book. On the other hand, some constructions of open books involving left-handed Dehn twists give overtwisted manifolds. For example, a left-handed stabilization of any open book supporting a contact structure yields an overtwisted contact manifold. Inspired by this phenomenon Giroux proposed a way to generalize the notion of overtwistedness. Namely, one can construct open books that involve a left-handed Dehn-Seidel twist. This gives candidates for ``overtwisted'' manifolds in higher dimensions. 

On the other hand, in order to show that Giroux's ideas really give special contact manifolds in higher dimensions, one needs some kind of criterion.
 A topologically stable generalization of an overtwisted disk is hard to find in practice, but a very promising generalization of overtwistedness has been put forth by Niederkrüger \cite{Niederkrueger:PS}.
 Indeed, in a contact manifold $(M,\xi)$ of dimension $2n-1$ he considers contact structures that admit a family of overtwisted disks parametrized by a manifold $S$ of dimension $n-2$. Such contact manifolds are called {\bf $PS$-overtwisted}. Niederkrüger shows that $PS$-overtwisted contact manifolds are non-fillable.
 Moreover, by now $PS$-overtwisted contact structures are known to exist in every dimension greater than $1$ and there exist $PS$-overtwisted contact structures on spheres, see \cite{Presas:non-fillable} and \cite{Niederkrueger:non-fillable} for the construction of such manifolds.
 However, we shall be looking at other distinguishing qualities of overtwisted manifolds.

 A few criteria that come to mind are the facts that overtwisted contact manifolds are not symplectically fillable and that they have vanishing contact homology \cite{Yau:OT}.
 We shall consider the latter criterion, in particular because the vanishing of contact homology is conjectured to imply the non-existence of a strong filling.

At first, we shall consider the simplest case where one has an open book for $S^{2n-1}$ with page $T^*S^{n-1}$ and as monodromy a single left-handed Dehn-Seidel twist. The idea is to find a closed Reeb orbit that bounds a unique finite energy plane. As a result, the element $1$ in the contact homology algebra is exact and therefore all elements are exact. This result remains true under connected sums which illustrates the ``flexibility'' of this construction. 

This is related to the so-called plumbing or $2$-Murasugi sum, of which we shall now briefly describe the construction.
 Suppose we are given two contact open books with symplectic pages $\Sigma_1$ and $\Sigma_2$ with properly embedded Lagrangian balls $L_1\subset \Sigma_1$ and $L_2 \subset \Sigma_2$.
 Assume that the monodromy is given by $\psi_1$ and $\psi_2$ respectively.
 Suppose also that the boundaries $\partial L_i\subset \Sigma_i$ are Legendrian spheres. Then we can glue the pages by a plumbing construction. Simply put, we can find Weinstein neighborhoods of $L_i$ giving $q$ and $p$ coordinates. Then we can glue $\Sigma_1$ to $\Sigma_2$ by identifying the $q$-coordinates of $L_1$ with the $p$-coordinates of $L_2$ and vice versa. This yields a page $\Sigma$. For the monodromy of the new open book, we use $\psi_2 \circ \psi_1$.
 
This is a well known construction in dimension $3$ and a theorem of Torisu \cite{Torisu:Murasugi_sum} shows that the resulting manifold is contact and in fact contactomorphic to the connected sum $(\Sigma_1,\psi_1)\# (\Sigma_2,\psi_2)$.
 We shall give here some evidence that this might be true in higher dimensions as well.
 This would, besides being a useful tool for contact open books, also imply the following two claims.
 A right-handed stabilization of a contact open book is contactomorphic to the contact open book itself, see \cite{Giroux:ICM}.
 A left-handed stabilization of a contact open book has vanishing contact homology.

\subsection{Remarks and summary of our results}
We conclude the introduction by a few observations on vanishing of the contact homology and a summary of our main results.
 In this paper, we will be using the full contact homology, i.e.~the homology of the differential graded algebra generated by closed Reeb orbits as defined in~section~2.1 of \cite{Eliashberg:SFT}. Geometrically, we have the following well known observation for vanishing contact homology \cite{Eliashberg:ICM98}.
\begin{conjecture}
Suppose $(M,\xi=\ker \alpha)$ is a strongly fillable contact manifold. Then $HC_*(M,\xi)\neq 0$.
\end{conjecture}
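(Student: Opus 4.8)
The plan is to use the filling to manufacture an augmentation of the contact homology algebra and then observe that an augmentation cannot coexist with the vanishing of $HC_*$. So let $(W,\omega)$ be a strong symplectic filling of $(M,\xi=\ker\alpha)$ and complete it to $\hat W=W\cup_M\bigl(M\times[0,\infty)\bigr)$ by attaching the positive cylindrical end, on which $\omega=d(e^t\alpha)$. After perturbing $\alpha$ to a nondegenerate contact form, choose an almost complex structure $J$ on $\hat W$ that is $\omega$-compatible, cylindrical and Reeb-adjusted on the end. For each closed Reeb orbit $\gamma$ consider the moduli space of $J$-holomorphic planes in $\hat W$ with a single positive puncture asymptotic to $\gamma$; counting, with rational weights, the rigid elements of these spaces defines a map on the generators of the contact homology DGA $(\mathcal A,\partial)$, extending uniquely to a unital algebra homomorphism $\epsilon\colon\mathcal A\to\Q$ with $\epsilon(1)=1$.

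The next step is to check that $\epsilon$ is a chain map, i.e.\ $\epsilon\circ\partial=0$, the differential on the target $\Q$ being trivial. This is the standard SFT bookkeeping: analyze the boundary of the one–dimensional components of the above moduli spaces. By the SFT compactness theorem the degenerations are holomorphic buildings whose top level lies in the symplectization $\R\times M$ (this produces $\partial$) glued to a plane in $\hat W$, together with configurations in which a plane in $\hat W$ breaks off several sublevel curves in $\hat W$; the signed counts of these boundary configurations cancel in pairs, which gives $\epsilon(\partial\gamma)=0$ for every generator $\gamma$ and hence for all of $\mathcal A$.

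Finally, suppose for contradiction that $HC_*(M,\xi)=0$. Then the class of the unit vanishes, so $1=\partial x$ for some $x\in\mathcal A$. Applying $\epsilon$ yields $1=\epsilon(1)=\epsilon(\partial x)=0$, a contradiction; therefore $HC_*(M,\xi)\neq0$.

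I expect the main obstacle to be not the formal argument above but the analytic foundations it rests on: one needs the relevant moduli spaces of holomorphic curves in $\hat W$ and in $\R\times M$ to carry the structure of (weighted, branched) manifolds of the expected dimension with the expected boundary, which in general requires the full transversality and gluing package of SFT, i.e.\ abstract perturbation / polyfold techniques. A secondary technical point is the a priori energy bound: for a strong rather than exact filling one must verify that planes asymptotic to a fixed $\gamma$ have bounded $\omega$-energy, which follows from the action of $\gamma$ together with exactness of $\omega$ on the end; and one must check that the resulting $\epsilon$ is independent of the auxiliary choices up to the appropriate equivalence of augmentations.
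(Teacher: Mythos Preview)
Your argument is essentially the same as the paper's: construct a unital chain map from the contact homology DGA to a ground ring by counting rigid planes in the completed filling, then derive a contradiction from $1=\partial x$; the paper routes this through $\Q[H_2(W)]$ with a coefficient change along $H_2(M)\to H_2(W)$ while you go directly to $\Q$, but the skeleton is identical. Both the paper and you correctly flag that the statement remains a conjecture precisely because the chain map property rests on the full SFT transversality/gluing package, which had not been rigorously established.
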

The idea is here that a filling $(W,\omega)$ gives a cobordism of $(M,\alpha)$ to the ``empty contact manifold'', the latter having contact homology isomorphic to $\Q[H_2(W)]$. In a bit more detail, the filling gives a map 
\begin{eqnarray*}
\Phi: {\bf A}_*(M,\xi) & \longrightarrow & \Q[H_2(W)],
\end{eqnarray*}
defined by counting index $0$ holomorphic planes in the filling $(W,\omega)$.
 By the homomorphism property, this map sends $1\in {\bf A}_*(M,\xi)$ to $1\in \Q[H_2(W)]$.
 Furthermore, the map $\Phi$ is a chain map, i.e.~here it satisfies the identity
$$
\Phi \circ \partial_W=0.
$$
Here $\partial_W$ denotes the differential in the ``symplectization part'' of the filling. This is the differential of contact homology, but with coefficients in $\Q[H_2(W)]$ instead of $\Q[H_2(M)]$. The identity itself can be shown in the same way as the chain map property for cylindrical symplectic cobordisms needed for invariance of contact homology. If we suppose now that $HC_*(M,\xi)=0$, then we find a linear combination of products of Reeb orbits that have differential equal to $1$,
$$
\partial( \sum_{\mathrm{finite} } q_i e^{A_i}\gamma_{n^i_1}\ldots \gamma_{n^i_{k_i}} )=1.
$$
Let now $j$ denote the map on homology induced by the inclusion $M\subset W$. Then in terms of the $\partial_W$ differential, we have
$$
\partial_W( \sum_{\mathrm{finite} } q_i e^{j(A_i)}\gamma_{n^i_1}\ldots \gamma_{n^i_{k_i}} )=1.
$$
If we take $\Phi$ on both the left- and right hand side, we find $0$ on the one hand and $1$ on the other hand, which gives a contradiction. The above ideas give the argument to prove this conjecture up to transversality. Indeed, the chain map property of $\Phi$ has not yet been established rigorously.

The above conjecture gives a geometric application for vanishing of contact homology, namely the non-existence of a filling. 
We also would like to point out that, in dimension $3$, overtwisted contact manifolds have vanishing contact homology, see \cite{Yau:OT}. 
So far, these overtwisted manifolds are the only ones known to have vanishing contact homology. 
The following definition might therefore make sense.
\begin{definition}
We say a cooriented contact manifold $(M,\xi=\ker \alpha)$ is \emph{algebraically overtwisted} if $HC_*(M,\xi)=0$.
\end{definition}

In this language the main theorem that we shall prove is
\begin{theorem}
Let $(M,\xi)$ be a cooriented contact manifold and let $L$ be a boundary parallel Lagrangian ball in a compatible open book.
 Then the left-handed stabilization of $(M,\xi)$ along $L$ is algebraically overtwisted.
\end{theorem}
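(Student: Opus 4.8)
\emph{Sketch of the intended proof.} The plan is to reduce everything to one explicit model building block and then to propagate algebraic overtwistedness to an arbitrary left-handed stabilization by a localization argument. The model is the open book on $S^{2n-1}$ whose page is a Weinstein neighbourhood of the zero section of $T^{*}S^{n-1}$ and whose monodromy is a single left-handed Dehn--Seidel twist $\tau^{-1}$ along that zero section $S=S^{n-1}$. The hypothesis that $L$ is boundary parallel is used precisely here: the monodromy $\psi$ of the given compatible open book can be isotoped to the identity near the attaching sphere $\partial L$, so that after the stabilization a neighbourhood of the new Lagrangian sphere, together with the new monodromy $\psi\circ\tau_{S}^{-1}$, agrees with the model; equivalently, the left-handed stabilization along a boundary parallel $L$ contains (after a connected-sum description, in the spirit of the plumbing discussed in the introduction) a ``model region'' $U$ into which the rest of $M$ is glued far away. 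It therefore suffices to exhibit, inside the stabilized manifold, a closed Reeb orbit supported in $U$ that bounds a unique finite-energy plane, and to make sure no holomorphic building can escape $U$.

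I would equip the stabilization with a Giroux/Morse--Bott type contact form $\alpha$ subordinate to the open book, normalized so that in $U$ the Reeb flow is the mapping-torus flow of $\tau^{-1}$, while the periods of all closed Reeb orbits that are not a distinguished short one are large (in particular larger than the binding orbits, the orbits coming from $M$, and so on). The left-handed twist leaves $S$ invariant and acts on it by the antipodal map, so the Reeb orbits lying over $S$ form a Morse--Bott family; a small Morse perturbation replaces it by finitely many nondegenerate orbits, among which let $\gamma$ be a suitable one. The key computation is that the Conley--Zehnder index of $\gamma$ \emph{for the left-handed twist} is small enough that finite-energy planes asymptotic to $\gamma$ are rigid, i.e.\ the moduli space of such planes, after the $\R$-action, is zero-dimensional; this is the exact point at which left- versus right-handedness enters, since for the right-handed twist these orbits are ``positive'' and no such plane exists. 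Since the normalization makes $\gamma$ realize the minimal action among all closed Reeb orbits, any punctured holomorphic curve with positive puncture $\gamma$ has no negative punctures, so $\partial\gamma$ is the (weighted, oriented) count of rigid planes asymptotic to $\gamma$ and lies in the coefficient ring $\Q[H_{2}]$.

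Next I would produce one such plane explicitly, exploiting the concrete geometry of $T^{*}S^{n-1}$ and of $\tau^{-1}$ (for instance capping off a cotangent-fibre-type disk that the left-handed twist makes available), and check it is Fredholm regular. The principal obstacle is the \emph{uniqueness} statement: one must show that the SFT-compactified moduli space of rigid planes asymptotic to $\gamma$ is a single regular point. This combines (i) the action bound, which forbids breaking off nonconstant cylinders and forbids negative punctures; (ii) a compactness/no-escape argument, in the spirit of a holomorphic open book and of positivity of intersection with the binding and the pages, showing the curves cannot run into the binding or into the part of the manifold lying outside $U$ (here a sufficiently long neck along $\partial U$, together with invariance of contact homology under the auxiliary choices, is used to confine the curves to the model region); and (iii) an orientation computation showing the algebraic count equals $\pm e^{A}$ for some $A\in H_{2}$, in particular a unit of $\Q[H_{2}]$. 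As usual in this setting the argument is modulo the standard transversality issues of SFT, but the relevant moduli spaces are simple enough that one expects, and in the model can largely arrange, honest regularity.

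Granting this, $\partial\gamma=\pm e^{A}\cdot 1$ in the contact homology differential graded algebra of the left-handed stabilization, so the unit $1$ is a boundary; since $1$ is a cycle, its class vanishes in $HC_{*}$, and a ring in which $[1]=0$ is the zero ring, whence $HC_{*}=0$. By the Definition above this is exactly the assertion that the left-handed stabilization of $(M,\xi)$ along $L$ is algebraically overtwisted, proving the Theorem. The extension to connected sums announced in the introduction is subsumed in the localization step, since forming a connected sum away from $U$ changes neither the model region nor the moduli space of planes asymptotic to $\gamma$.
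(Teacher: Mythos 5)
Your outline reproduces the paper's overall architecture (explicit model sphere with a left-handed twist, a distinguished low-index orbit bounding a unique rigid plane, action bounds to exclude negative punctures, then a connected-sum/localization step using that a stabilization along a boundary-parallel ball is the connected sum with the model sphere), but two of its load-bearing points are genuinely off or missing. First, the orbit you single out is not the one that works, and your account of where handedness enters is not correct. The Reeb orbits lying over the zero section $S$, where the twist acts antipodally, exist for \emph{both} the right- and the left-handed twist, so their presence cannot be the distinguishing mechanism; in the model the relevant Morse--Bott family sits instead at the radius $|\vec p|=p_0$ where the (suitably modified) twisting profile $g_{-1}$ vanishes, a zero which exists only in the left-handed case. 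That family is a copy of $ST^*S^{n-1}$ of orbits linking the binding once, and the grading computation ($\mu=\tfrac12\,\mathrm{sign}(g')+2i$ with $i=1$, plus the Morse--Bott correction) is what produces the degree~$1$ orbit $\gamma_0$. Correspondingly, the finite energy plane is not a capped cotangent-fibre-type disk near $S$: it is an explicit disk transverse to the binding, with constant $(\vec q,\vec p)$ and an ODE in the $(r,t)$-coordinates, and it stays away from the zero section (where the almost complex structure is not even pinned down). An argument built on the zero-section orbits would require a different, and harder, analysis that your sketch does not supply.

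Second, the steps you defer are the actual content of the proof, and the substitutes you propose do not carry the load. Regularity of the plane is established by showing the kernel of the linearized operator is exactly $5$-dimensional (a weighted Salamon--Zehnder-type estimate plus a Fourier/phase-plane analysis), matching the Fredholm index; uniqueness is proved by an energy estimate using the winding number $1$ around the binding to confine any plane asymptotic to $\gamma_0$ to the region $r\le r_0$, followed by a reduction to dimension $3$ and a positivity-of-intersection/linking argument. Your appeal to a long neck along $\partial U$ together with invariance of contact homology is weaker than this: invariance is itself only available modulo the same transversality issues, and no-escape must be argued directly. Likewise, in the connected-sum step it is not enough that the rest of $M$ is glued ``far away'': one must control the \emph{new} orbits created by the sum, which the paper does concretely (tube orbits in Ustilovsky's model have degree at least $2n-3$ and can be given small action, wandering orbits have arbitrarily large action, and $\gamma_0$ can be arranged to have the smallest action, while any plane on $\gamma_0$ must pass through the binding of the model and so cannot reach the tube). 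Without the index computation for the correct orbit, the kernel/uniqueness analysis, and this orbit bookkeeping, the key identity $\partial\gamma_0=\pm1$ -- and hence the vanishing of $HC_*$ -- has not been established.
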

In fact, we have evidence to conjecture that algebraically overtwisted contact manifolds behave nicely under connected sums.
\begin{conjecture}
Let $(M,\xi)$ be a cooriented contact manifold and let $(N,\eta)$ be an algebraically overtwisted contact manifold. Then $(M,\xi)\#(N,\eta)$ is also algebraically overtwisted.
\end{conjecture}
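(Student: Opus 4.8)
The plan is to transport a chain witnessing $HC_*(N,\eta)=0$ into the connected sum. Since $(N,\eta)$ is algebraically overtwisted, for a suitable contact form $\alpha_N$ there is a finite chain $\beta=\sum_i q_i e^{A_i}\gamma^i_1\cdots\gamma^i_{k_i}$ in $\mathbf{A}_*(N,\eta)$ with $\partial_N\beta=1$, and this identity is witnessed by finitely many rigid genus-$0$ holomorphic curves in the symplectization $\R\times N$, each carrying a single positive puncture at one of the $\gamma^i_j$. The projections to $N$ of these finitely many orbits and curves form a compact set of measure zero, so I may choose a small Darboux ball $B\subset N$ disjoint from all of them.

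First I would build $(M,\xi)\#(N,\eta)$ so that the relevant part of $N$ is preserved: remove $B$ from $N$ and a Darboux ball from $M$, and glue along a long neck $\mathcal{N}\cong S^{2n-2}\times[-R,R]$ carrying a contact form with no short closed Reeb orbits, arranging the glued contact form $\alpha$ to restrict to $\alpha_N$ on a fixed compact region of $N\setminus B$ containing all the $\gamma^i_j$ and all the chosen curves, and so that the modification near $\partial B$ produces no new Reeb orbits of action below those of the $\gamma^i_j$. Then each $\gamma^i_j$ is a closed Reeb orbit of $\alpha$ of unchanged action, the classes $A_i\in H_2(N)$ map into $H_2(M\#N)$, and $\beta$ defines a chain in $\mathbf{A}_*\big((M,\xi)\#(N,\eta)\big)$. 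The key step is the localization identity $\partial_{M\#N}\gamma^i_j=\partial_N\gamma^i_j$ for every orbit occurring in $\beta$. A curve contributing to $\partial_{M\#N}\gamma^i_j$ has energy at most the action of $\gamma^i_j$, a bound independent of $R$; on the other hand, by the monotonicity lemma any holomorphic curve whose image crosses the length-$2R$ neck has energy at least $cR$ for a fixed $c>0$. Hence for $R$ large such curves cannot reach the $M$-side, and since the neck carries no short orbits all their asymptotics lie on the $N$-side. Stretching the neck, and using that deleting the small ball $B$ — chosen disjoint from the finitely many rigid curves and replaced by a trivial cylindrical end — does not change the relevant counts (again by monotonicity: a bounded-energy curve cannot run far into the standard end, hence survives filling $B$ back in), one identifies the contributing curves with the rigid curves in $\R\times N$. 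By the Leibniz rule this gives $\partial_{M\#N}\beta=\partial_N\beta=1$, so $1$ is exact and $HC_*\big((M,\xi)\#(N,\eta)\big)=0$.

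The main obstacle is the usual transversality problem underlying the very definition of contact homology, already flagged in the discussion of Conjecture 1.1; granting it, the technical crux is to make the neck-stretching precise — in particular the claim that removing a small Darboux ball disjoint from a prescribed finite family of rigid curves does not alter their count, and that the connected sum contact form can be taken with a neck free of short Reeb orbits and an undisturbed isometric copy of the relevant part of $N$. An alternative, which ties the statement to the rest of the paper, is to realize $(M,\xi)\#(N,\eta)$ as a $2$-Murasugi sum of compatible open books (using the conjectured contactomorphism between the $2$-Murasugi sum and the connected sum): the finite energy planes realizing $\partial_N\beta=1$ can then be kept inside the $N$-piece of the plumbed page, which is exactly the mechanism behind the proof of Theorem 1.3, where $N$ is the contact sphere with page $T^*S^{n-1}$ and left-handed monodromy and $\beta$ is a single Reeb orbit bounding a unique finite energy plane inside the plumbed-in $T^*S^{n-1}$. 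This route, however, carries the additional conjectural input about Murasugi sums.
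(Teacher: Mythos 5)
First, note that the paper does not prove this statement: it appears only as a conjecture (``we have evidence to conjecture\dots''), and the closest proven result is Theorem~\ref{thm:vanishingCH}, the special case where $(N,\eta)$ is the particular sphere $(S^{2n-1},\alpha_L)$. That proof uses features specific to that model which have no analogue for a general algebraically overtwisted $(N,\eta)$: the witness chain is a single orbit $\gamma_0$ of degree $1$ whose action can be made arbitrarily small, every plane asymptotic to $\gamma_0$ must intersect the binding and hence stays away from the connected-sum region placed near the zero-section of the page, and the tube orbits of the connecting neck have degree at least $2n-3>1$, so curves from $\gamma_0$ into the neck are excluded by degree or action. So your proposal is not ``the same as the paper''; it is an attempt at the general statement, and it has genuine gaps.

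The main gap is your treatment of the neck. In the standard models for the contact connected sum (Weinstein's handle, Ustilovsky's tube, which is what the paper itself invokes), the tube carries closed Reeb orbits sitting in a contact sphere in the middle of the tube, and because the Darboux ball removed from $N$ must be small (to miss your finitely many witness curves), these tube orbits have \emph{small} action -- making the neck long does not remove them or push their action above the actions of the $\gamma^i_j$, which are arbitrary. Consequently you cannot assume ``a contact form with no short closed Reeb orbits'' on the neck, and the real danger is not curves crossing to the $M$-side but new rigid curves with positive puncture at some $\gamma^i_j$ and negative punctures at tube orbits: these would add terms to $\partial_{M\#N}\gamma^i_j$ beyond $\partial_N\gamma^i_j$, and since the $\gamma^i_j$ have arbitrary degree and action you have no degree or action argument to exclude them (this is exactly what the paper's special case buys from $\deg\gamma_0=1$ and the smallness of $\mathcal A(\gamma_0)$). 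Your monotonicity claims are also not correct as stated: the Hofer energy of a punctured curve in a symplectization is controlled by the actions of its asymptotic orbits, not by how far it travels in the contact manifold, so ``crossing a length-$2R$ neck costs energy $cR$'' is not a consequence of the monotonicity lemma (a trivial cylinder over a tube orbit lives arbitrarily deep in the modified region at fixed small energy), and likewise ``a bounded-energy curve cannot run far into the standard end, hence survives filling $B$ back in'' fails for the same reason. What the argument actually requires is a neck-stretching/SFT-compactness analysis of the limiting buildings together with a mechanism (degree, action, or geometry as in the paper's sphere model) ruling out levels with punctures at tube orbits, plus the gluing step identifying the surviving curves with those in $\R\times N$; none of this is supplied, on top of the transversality issues you correctly flag. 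Your alternative route via the $2$-Murasugi sum matches the paper's philosophy but, as you note, rests on the higher-dimensional plumbing statement which is itself only conjectural in the paper.
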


\subsection{Plan of the paper}

The paper is organized as follows. 
 In Section~\ref{sec:prelims} we describe the setup for a special left-handed stabilization of $(S^{2n-1},\xi_0)$ in more detail.
 We shall describe an explicit model and give most of the needed geometric data.
 
In Section~\ref{sec_chain_complex} we describe the chain complex of contact homology in more detail and compute the degrees of involved Reeb orbits.
 Section~\ref{sec:hol_curves} is about holomorphic curves. By explicit computation we find a holomorphic curve, which is one of the ingredients of the differential. Section~\ref{sec:transversality} is the main technical part of the paper.
 There we shall show that the linearized Cauchy-Riemann operator at the solution we found earlier is surjective. The latter ensures a proper curve count.

 Section~\ref{sec:uniqueness} is concerned with the existence of other holomorphic curves.
 We shall show that the finite energy plane found in Section~\ref{sec:hol_curves} is the only curve that is asymptotic to a certain Reeb orbit of index $1$.
 This shows that the contact homology of our model manifold vanishes.

 Finally, in Section~\ref{sec:stabilization} we shall describe the stabilization procedure and Murasugi sum in more detail.
 We shall finish the paper by making a few conjectures about stabilizations in relation to contact homology and Murasugi sums in general.\\
\\
\noindent{\bf Acknowledgements. }
This work was initiated during a postdoc funded by the Fonds National de la Recherche Scientifique, Belgium. Currently, O.~van Koert is supported by the Japan Society for the Promotion of Science. 

We would like to thank Emmanuel Giroux for helpful comments and suggestions on plumbing of open books.

\section{Preliminaries and setup}
\label{sec:prelims}
We first describe a construction for an exotic contact sphere. 
The construction is the simplest case of an idea of Giroux, which is to modify the stabilization of an open book. 
By stabilization we mean the following. 
Let $P$ be the $2n$-dimensional page of the open book of $(M,\xi)$ and let $L$ be a Lagrangian $n$-disk in $P$ with $\partial L\subset \partial P$. 
Suppose that the monodromy of the open book is the identity on a neighborhood of $L$. 
We can attach a Weinstein $n$-handle to $P$ along $\partial L$ following \cite{Weinstein:surgery}. 
This way we obtain a Lagrangian sphere in the new page $\tilde P$. 
If we now choose as monodromy a right-handed Dehn twist on this Lagrangian sphere and compose this map with the original monodromy on the rest of $P\subset \tilde P$, then the resulting contact manifold is conjectured to be contactomorphic to $(M,\xi)$. 
Note that the latter is actually proved in many cases, for example it was shown by Giroux \cite{Giroux:ICM} in dimension~$3$ and it is also known for boundary parallel Lagrangian balls.
We provide an alternative description of a stabilization in Section~\ref{sec:stabilization}. 

However, Giroux's idea is to replace the right-handed Dehn twist in a stabilization by a left-handed one.
 For the diffeomorphism type of the resulting open book, this does not make any difference.
 For the contact structure there is a difference.
 In dimension 3 this is well known to give overtwisted contact manifolds, see for instance Lemma~4.1 in \cite{Goodman:OTopenbook}.
 We shall show that in the simplest case in higher dimensions we obtain algebraically overtwisted contact manifolds.
 In the following section we describe this simplest case, namely stabilization of the standard open book of $S^{2n-1}$ (having page $D^{n-1}$).
 Alternatively, one can also describe this particular left-handed stabilization by simply applying a left-handed Dehn twist to $T^*S^{n-1}$.
 
\subsection{Dehn twists}
As a initial model we consider $(T^*S^{n-1},d \lambda)$, where $\lambda$ is the canonical $1$-form on $T^*S^{n-1}$.
Throughout this paper we will be using coordinates
$$
(\vec q,\vec p) \in \R^{2n}
$$
to describe the cotangent bundle $T^*S^{n-1}$.
 These coordinates satisfy the relations
\begin{equation}
\label{eq:coordinatesTSn}
\vec q \cdot \vec q=1 \text{ and } \vec q \cdot \vec p=0.
\end{equation}
With these coordinates the canonical $1$-form $\lambda$ is given by
$$
\lambda=\vec p \cdot d \vec q.
$$
We use this symplectic manifold as page in an open book decomposition for $S^{2n-1}$. 
Next, we shall define a Dehn twist. First define the following auxiliary map describing the normalized geodesic flow
$$
\sigma_t(\vec q,\vec p)=
\left(
\begin{array}{cc}
\cos t & -|\vec p|^{-1} \sin t \\
|\vec p|\sin t & \cos t 
\end{array}
\right)
\left(
\begin{array}{c}
\vec q \\ \vec p
\end{array}
\right) .
$$
For $k\in \Z$ a $k$\textbf{-fold Dehn twist $\Dehn_k$} is a diffeomorphism of $T^*S^{n-1}$ of the form
$$
\tau_k(\vec q,\vec p)=\sigma_{g_k(|\vec p|)}(\vec q,\vec p)
$$

Here $g_k$ is a smooth function with the following properties.
\begin{itemize}
\item{} In 0, we have $g_k(0)=k\pi$.
\item{} Fix $p_0>0$. For $|\vec p|>0$, the function $g_k(|\vec p|)$ either decreases or increases to $0$ at $g_k(p_0)$, depending on whether $k$ is positive or negative.
For $|\vec p| \geq p_0$, we put $g_k(|\vec p|)=0$.
\end{itemize}
This is the standard definition of a Dehn-Seidel twist as introduced by Seidel, see Section~6 in \cite{Seidel:knotted_lagrangian}.

For computational convenience, it is useful to use another form instead. 
In fact, it will be convenient for us to compose the Dehn twist with a small geodesic flow increasing with distance.
This way, it will glue nicely to a standard model near the binding. 
We will now redefine $g_k$ to suit our purposes.
Fix a small constant $\tilde \epsilon>0$ and put $\tilde g_k:=g_k +\tilde \epsilon |\vec p|$.
In particular, for $k>0$, the function $\tilde g_k$ has no zeroes.
For $k<0$, the function $\tilde g_k$ has a unique zero at $p_0$.
By abuse of notation, we shall simply write $g_k$ to mean $\tilde g_k$ in the remainder of this paper.

We shall call the associated map $\tau_k$ a {\bf left-handed Dehn twist} if $k=-1$. 
For $k=1$, the map $\tau_k$ is called a {\bf right-handed Dehn twist}.

In Figure~\ref{fig_function_gk} we sketched a graph of a function satisfying these properties for our modification of a Dehn twist. 
In the following we will also use the quantity $g_k-k\pi$, which we will denote by $f_k$.

\begin{figure}
\begin{center}
\include{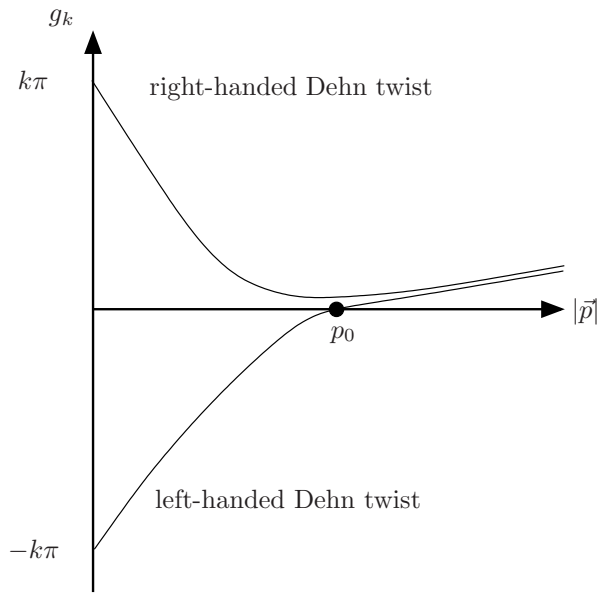}
\end{center}
\caption{Examples of the function $g_k$ describing a right- and left-handed Dehn twist}
\label{fig_function_gk}
\end{figure}

\begin{remark}
Our choice of $g_k$ ensures that the contact form we will be working with is Morse-Bott.
It is also helpful for the determining the differential of contact homology.
 See also Remark~\ref{rem:complicated_diff}.
\end{remark}

Note that a Dehn twist is a symplectomorphism, i.e.~$\tau_k^* d\lambda=d\lambda$. This follows from the transformation behavior of the canonical $1$-form $\lambda$ on $T^*S^{n-1}$. We have 
$$ 
\tau_k^* \lambda = \lambda + |\vec p|\,d\big(g_k(|\vec p|)\big). 
$$
Note that the difference $\lambda - \tau_k^* \lambda $ is exact, implying our above claim. As a primitive of this difference $\lambda - \tau_k^* \lambda $  we take 
$$
h_k(|\vec p|):=1+\int_0^{|\vec p|} s
g_k'(s)ds.
$$ 
Note that $h_k$ can be assumed to be positive on the interval $[0,1]$ by choosing $p_c<1$ sufficiently small.

\subsection{A simple open book decomposition for $S^{2n-1}$}
\label{sec_openbook_construction}
In this section we will apply Giroux's construction for contact open books with a $k$-fold Dehn twist, indicated by the subindex $k$. 
First we construct a mapping torus of $T^*S^{n-1}$ using a Dehn
twist following the construction of Giroux and Mohsen
\cite{Giroux:talk,Giroux:ICM}. 
Consider the map
\begin{eqnarray*}
\phi_k:~ T^*S^{n-1} \times \R & \longrightarrow & T^*S^{n-1}\times \R ,\\
(\vec q,\vec p;\varphi) & \longmapsto & (\tau_k(\vec q,\vec p);\varphi+h_k(|\vec p |)).
\end{eqnarray*}
This map preserves the contact form 
$$
\alpha=d\varphi+ \vec p d\vec q
$$ 
on $T^*S^{n-1}\times \R$, so we obtain an induced contact structure on  
$$
A:=T^*S^{n-1}\times \R / \phi_k.
$$
We define $A_0$ to be the subset of $A$ that corresponds to the zero section of $T^*S^{n-1}$,
$$
A_0:=\{ (\vec q,0;\varphi)\in A \} .
$$

We shall use another mapping torus in order to have an explicit model for gluing in the binding and to make computations more convenient. We shall consider
$$
\tilde A:=\left( \left( T^*S^{n-1}-0 \right) \times \R \right) /(x,\varphi)\sim (x,\varphi+1) \cong \left( T^*S^n-0 \right) \times S^1.
$$
Away from the zero section a Dehn twist can be written as the flow of a vector field (namely the geodesic flow). Its $t$-flow is given by $\sigma_t$. We use this fact to "unwrap" the Dehn twist giving the following map

\begin{eqnarray*}
\psi: \tilde A & \to & A \\
\left( (\vec q,\vec p);\varphi \right) & \mapsto & \left( \sigma_{\varphi g_k }(\vec q,\vec p);h_k(|\vec p|) \varphi \right) 
\end{eqnarray*}

Note that this map is a diffeomorphism onto its image. We get a contact form $\tilde \alpha$ on $\tilde A$ by pulling back $\alpha$ under the map $\psi$,
$$
\tilde \alpha= \psi^* \alpha=\tilde h_k(|\vec p|) d\varphi+\vec p d \vec q,
$$
where $\tilde h_k=1-\int_0^{|\vec p|} g_k (s) ds$.
We will denote a neighborhood of the binding of the open book decomposition of $S^{2n-1}$ by $B:=ST^*S^{n-1}\times D^2$. We can, in fact, choose this neighborhood so large that it covers all of $\tilde A$. In other words $B$ will describe the entire contact manifold save for a set of positive codimension. The computations we shall do require explicit coordinates for $ST^*S^{n-1}$. More precisely, we have defined the set $ST^*S^{n-1}\subset \R^{2n}$ in the above by the equations
\begin{equation}
\label{eq:STS_definition}
\vec q \cdot \vec q=1,~\vec p \cdot \vec p=1 \text{ and } \vec q \cdot \vec p=0.
\end{equation}

We can choose this neighborhood so large that it includes the entire mapping torus of pages, except for the zero section, as we shall see in the following.
On this neighborhood $B$ of the binding we have the contact form
\begin{equation}
\label{eq:contactform_binding}
\alpha=h_1(r) \lambda+h_2(r)d\varphi,
\end{equation}
where $\lambda$ is the restriction of the canonical $1$-form $\vec p d\vec q$ to $ST^*S^{n-1}$ and $(r,\varphi)$ are polar coordinates on $D^2$. The functions $h_1$ and $h_2$ are chosen in such a way that $\alpha$ is a contact form and such that it matches the form $\tilde \alpha$ in a collar neighborhood of the boundary. We have used the map
\begin{eqnarray*}
\psi_{b}:~B &\longrightarrow & \tilde A
\end{eqnarray*}
\begin{eqnarray}
\label{eq:binding_pullback}
(\vec q,\vec p,r,\varphi) & \longmapsto & (\vec q,\vec p/r,\varphi)
\end{eqnarray}
to identify these collar neighborhoods of the boundary.
 In Figure~\ref{fig_functions_left} we indicate how a graph of the functions $h_1$ and $h_2$ could look like such that the above holds. 
 The point $r_0$ where $h_2$ assumes its maximum is indicated in Figure~\ref{fig_functions_left}.
 For $\alpha$ to be a contact form, we need to impose that following condition on the functions $h_1$ and $h_2$,
$$
\alpha\w d\alpha^{n-1}=h_1^{n-2}(h_1h_2'-h_2 h_1')/r (dr \w r d\varphi \w \lambda \w d\lambda^{n-2}) \neq 0.
$$
That is to say that $\alpha$ is a contact form if $h_1\neq 0$ and the quantity
$$
\det H:=h_1h_2'-h_2h_1'
$$
is such that the smooth function $\det H/r$ is non-vanishing. This is the determinant of the matrix
\begin{equation}
\label{eq:definition_H}
H:=
\left(
\begin{array}{cc}
h_2' & h_1'\\
h_2 & h_1
\end{array}
\right).
\end{equation}
For the choice indicated in Figure~\ref{fig_functions_left}, we have indeed that $\det H/r\neq 0$.
\begin{figure}
\begin{center}
\include{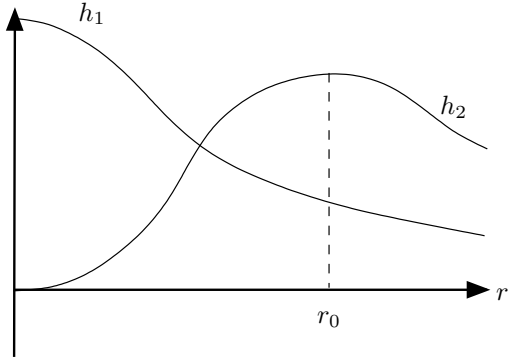}
\end{center}
\caption{Functions $h_1$ and $h_2$ for a left-handed Dehn twist pushed into the binding}
\label{fig_functions_left}
\end{figure}

\begin{remark}
Our choice of the function $h_2$ is not standard.
 It is more common to restrict the domain of $h_2$ to a set $[0,r_c]$, where $r_c<r_0$.
Roughly speaking, our choice of $h_2$ thickens the binding to include a larger part of the mapping torus $A$. 
If we want, we can in fact map the entire mapping torus $A$, except for the zero set of $T^*S^{n-1}$, into $B$.
The main advantage is that the holomorphic curves that we shall write down, can be written down in the set $B$, so there will be no need to match pieces of holomorphic curves lying in $A$ and $B$.
The role of the mapping torus $\tilde A$ is only an auxiliary one.
\end{remark}

This way we get a global contact form $\alpha$ on $S^{2n-1}$ that is compatible with a left-handed Dehn twist.
 We shall write the associated contact structure on $S^{2n-1}$ as $\xi=\ker \alpha$.
 We shall denote the contact sphere constructed this way by $(S^{2n-1},\alpha)$.

\begin{remark}
We would like to emphasize that the contact forms that we have constructed are all of Morse-Bott type. 
In the next section we will investigate the spaces that consist of closed Reeb orbits.
Finally, in the remainder of the paper we shall keep the notation $g_k$ and all other functions with a subscript $k$, but we shall always mean that $k=-1$, though some claims hold true in more general cases.
\end{remark}

\section{Chain complex}
\label{sec_chain_complex}
The goal is to do some index computations to show that for a left-handed Dehn twist there exists a degree $1$ orbit. For reasons of clarity and generality we keep the notation $g_k$. 

\subsection{Closed Reeb orbits and their Maslov indices}
As is usual in Morse-Bott contact homology we start by looking at the spaces of closed Reeb orbits. We shall show that they are closed submanifolds and we shall also compute their homology, which is needed for the computation of the degree of the generators of the Morse-Bott complex.

On the mapping torus $A$ the Reeb field for the contact form $\alpha$ is equal to
$$
R_{\alpha}=\frac{\partial}{\partial \varphi}.
$$
Note that the flow of this vector field preserves the norm of $\vec p$. 
It does \emph{not} preserve the coordinates $(\vec q, \vec p)$ because of the monodromy we used in defining $A$. 
Now observe that each turn around the binding advances the geodesic flow by $g_k(|\vec p|)$. 
Since a Reeb orbit in $A$ is closed if and only if the geodesic flow is a multiple of $2\pi$ after some turns around the binding, we see that we get closed orbits precisely when $g_k(|\vec p|) \in \pi \Q$. 
The period of a closed Reeb orbit with $g_k(|\vec p|) \in \pi \Q$ is $h_k(|\vec p|) m$, where $m$ is the smallest positive integer such that $m g_k(|\vec p|) \in 2\pi \Z$.

For us only the case where $g_k(|\vec p|)=0$ shall turn out to be important, which can only happen if $k<0$. Then we have the orbit space $ST^*S^{n-1}$ since the monodromy is the identity for those values of $\vec p$ with $g_k(|\vec p|)=0$. If $n$ is odd, its rational homology is given by
$$
H_j(ST^*S^{n-1};\Q)\cong \left\{ \begin{array}{cc}
0 & \text{ if }j\neq 0,2n-3 \\
\Q & \text{ if }j=0,2n-3. \\
\end{array}\right. 
$$
If $n$ is even, then we have
$$
H_j(ST^*S^{n-1};\Q)\cong \left\{ \begin{array}{cl}
0 & \text{ if }j\neq 0,n-2,n-1,2n-3 \\
\Q & \text{ otherwise.} \\
\end{array}\right.
$$
It turns out that we shall only need the lowest degree orbit (since this will kill contact homology). In other words, we can choose a Morse function on the orbit space with a unique minimum. The closed Reeb orbit that corresponds to that minimum has degree $1$ as we shall compute in the next section.

\subsubsection{Maslov index of closed Reeb orbits}
For the computations we shall use the mapping torus $\tilde A$. First of all, let us define a few auxiliary vector fields.
The time $t$-flow of the following vector field generates the geodesic flow $\sigma_t(\vec q,\vec p)$ for $|\vec p|>0$
$$
G:=|\vec p| \vec q \frac{\partial}{\partial \vec p}-\frac{1}{|\vec p|}\vec p \frac{\partial}{\partial \vec q}.
$$
Define the "radial" vector field
$$
P:=\frac{\vec p}{|\vec p|} \frac{\partial}{\partial \vec p}
$$
and the modified geodesic flow,
$$
Q:=-G-\frac{|\vec p|}{\tilde h_k}\frac{\partial}{\partial \varphi}.
$$
Note that both the $P$ and $Q$ vector fields lie in the contact structure $\xi$.

The Reeb field of the form $\tilde \alpha$ on $\tilde A$ is given by
$$
R=\frac{1}{\tilde h_k(|\vec p|)-|\vec p| \tilde h_k'(|\vec p|)}(\frac{\partial}{\partial \varphi}+\tilde h_k'(|\vec p|) G).
$$
In order to simplify the coming equation, we introduce two functions $N$ and $g$ such that the Reeb field looks like
$$
R=N\frac{\partial}{\partial \varphi}+g G.
$$
The time $t$-flow is then given by
\begin{eqnarray*}
Fl_t^R:~(\varphi,\vec q,\vec p) & \mapsto & (\varphi+Nt,\sigma_{gt}(\vec q,\vec p)).
\end{eqnarray*}

Now let $\gamma$ be a closed Reeb orbit. 
Note that the norm of the $\vec p$ coordinate is constant along Reeb orbits and that for a closed Reeb orbit, the coordinate $\vec p$ needs to satisfy $g(|\vec p|)/N(|\vec p|)\in \pi \Q$. 
To fix some notation for the closed Reeb orbit, we will consider the $T=i/N(|\vec p|)$-flow of the Reeb field such that orbit closes, where $i$ is an integer divisible by $m$.
Note that $i$ indicates how many times an orbit revolves around the binding. 
To compute the Maslov index, we choose a symplectic basis for $\xi$ along this closed Reeb orbit $\gamma$ consisting of
$$
P,Q,\vec r_l \frac{\partial}{\partial \vec p},\vec r_{l'} \frac{\partial}{\partial \vec q} \text{ for }l,l'=1,\ldots n-1,
$$
where the vectors $\vec r_l$ are chosen orthogonal to $\vec q$ and $\vec p$ and to $\vec r_{l'}$ for $l\neq l'$. This can be arranged using the Gram-Schmidt process. Note that this symplectic basis does not extend to a disk. Later on, we shall choose a different, $\varphi$-dependent combination of $P$ and $Q$ such that the resulting basis does extend over a disk.

With respect to the above basis of $\xi$, we can compute that the linearized flow looks like
$$
\psi(t)=
\left(
\begin{array}{cccc}
1 & 0 & 0 & 0 \\
-g't & 1 & 0 & 0 \\
0 & 0 & \cos(gt)\mathbbm{1} & |\vec p| \sin(gt)\mathbbm{1} \\
0 & 0 & -\sin(gt)/|\vec p| \mathbbm{1} & \cos(gt) \mathbbm{1}
\end{array}
\right)
=
\left(
\begin{array}{cccc}
1 & 0 & 0 & 0 \\
-g't & 1 & 0 & 0 \\
0 & 0 & \mathbbm{1} & 0\\
0 & 0 & 0 & \mathbbm{1}
\end{array}
\right)
.
$$
Since the vector $P$ and $Q$ do not extend to a disk in a natural way (they are analogous to $\frac{\partial}{\partial r}$ and $\frac{\partial}{\partial \varphi}$ in polar coordinates), we consider a different linear combination of $P$ and $Q$,
$$
\left(
\begin{array}{c}
P' \\
Q'
\end{array}
\right)
=
\Phi(\varphi) \left(
\begin{array}{c}
P \\
Q
\end{array}
\right)
=
\left(
\begin{array}{cc}
\cos(2\pi \varphi) & -\sin(2\pi \varphi) \\
\sin(2\pi \varphi) & \cos(2\pi \varphi)
\end{array}
\right)
\left(
\begin{array}{c}
P \\
Q
\end{array}
\right).
$$
The resulting basis of $\xi$ given by
$$
P',Q',\vec r_l \frac{\partial}{\partial \vec p},\vec r_{l'} \frac{\partial}{\partial \vec q} \text{ for }l,l'=1,\ldots n-1
$$
does extend over a disk. 
We can now easily compute the Maslov index of the closed Reeb orbit using standard formulas. 
We shall denote the Robbin-Salamon index by $\mu$ and the Maslov index for a loop by $\mu_l$, see \cite{Robbin:Maslovindex} and \cite{Salamon:Floerlecture_notes}.
The map $\Phi$ winds $i$ times around $0$, and the map $\psi$ consists of a single symplectic skew.  
Hence we see by the loop axiom (see \cite{Salamon:Floerlecture_notes}) that
$$
\mu(\gamma_0)=\mu(\psi)+\mu_l(\Phi)=1/2~{\rm {sign}}(g')+2i.
$$

\subsubsection{Grading}
We now compute the gradings using the standard perturbation procedure from \cite{Bourgeois:thesis},~Section~2.2. In other words we choose a Morse function on the orbit spaces to define a regular contact form. As indicated in Figure~\ref{fig_function_gk}, the function $g_k'$ is positive at that point, so the lowest possible grading of the orbits $\gamma$ of that orbit type is given by
$$
\deg(\gamma)=2i+1/2-1/2(2n-3)+(n-3)=2i-1.
$$

In particular, we see that in the Morse-Bott case a degree $1$ orbit appears.
 In the next section, we shall show that this orbit bounds a finite energy plane.

\section{Holomorphic curves}
\label{sec:hol_curves}

\subsection{A complex structure for $\xi$ and finite energy planes bounding closed Reeb orbits}
The main goal here is to describe finite energy planes.
 To that end, we need to define a suitable almost complex structure on the symplectization $\sympl(S^{2n-1},\alpha)$ of $(S^{2n-1},\alpha)$.
 First we define a complex structure on $\xi$, which we will then extend to an almost complex structure on $\sympl(S^{2n-1},\alpha)$ in the usual way.

We shall construct a finite energy plane converging to a closed Reeb orbit $\gamma_0$ with $|\vec p|=p_0$. As in the previous section $p_0$ is the unique zero of the function $g_k$. For the left-handed Dehn twist $k=-1$, but we will keep the notation with a $k$, because some arguments are more general.

On $B$, the thickened neighborhood of the binding, we take a complex structure for $\xi$ of the form
$$
J_\xi=
\left(
\begin{array}{cccc}
0 & 0 & 0 & \frac{h_1}{\det H} \\
0 & \vec q \vec p^T & \mathbbm{1} & -\frac{h_2}{\det H}\vec p \\
0 & -\mathbbm{1} & -\vec p \vec q^T & \frac{h_2}{\det H}\vec q \\
-h_2' & -h_1'\vec p^T & 0 & 0
\end{array}
\right).
$$
We have ordered the coordinates $(\varphi,\vec q,\vec p,r)$.
 This complex structure was inspired by the complex structure on $T^*S^{n-1}$, but adapted to our situation.
 Note that the above $J_\xi$ is only a complex structure for $\xi$ when restricted to submanifolds we consider, that is~$ST^*S^{n-1}\times D^2$;
 it is \emph{not} a complex structure on $\R^{2n+2}$.
 In other words, one needs to use Equation~\ref{eq:STS_definition}, its differential and of course the restriction to vectors in $\xi$ to verify that $J_\xi$ is a compatible complex structure for $d\alpha$.

To understand what is happening, it is useful to keep the following vector fields in mind. We have the Reeb field on the binding which generates the geodesic flow.
$$
R_\lambda:=\vec p \frac{\partial}{\partial \vec q}- \vec q \frac{\partial}{\partial \vec p}.
$$
The Reeb field of the contact form $\alpha$ on $B$ is
\begin{equation}
\label{eq:Reeb_binding}
R_\alpha=\frac{1}{\det H}\left(h_2' R_\lambda-h_1'\frac{\partial}{\partial \varphi} \right).
\end{equation}
Another useful vector field is
\begin{equation}
\label{eq:geofield_binding}
J \frac{\partial}{\partial r}=\frac{1}{\det H}\left(-h_2 R_\lambda+h_1\frac{\partial}{\partial \varphi} \right).
\end{equation}

We extend $J_\xi$ to an almost complex structure on the symplectization using the usual recipe by requiring
$$
J\frac{\partial}{\partial t}=R_\alpha.
$$
Here $t$ is the $\R$-coordinate on the symplectization and $R_\alpha$ is the Reeb field for the contact form $\alpha$. The extended complex structure looks like
\begin{equation}
\label{eq:J_binding}
J=
\left(
\begin{array}{ccccc}
0 & 0 & 0 & \frac{h_1}{\det H}  & -\frac{h_1'}{\det H}\\
0 & \vec q \vec p^T & \mathbbm{1} & -\frac{h_2}{\det H}\vec p & \frac{h_2'}{\det H} \vec p \\
0 & -\mathbbm{1} & -\vec p \vec q^T & \frac{h_2}{\det H}\vec q & -\frac{h_2'}{\det H}\vec q \\
-h_2' & -h_1'\vec p^T & 0 & 0 & 0 \\
-h_2 & -h_1\vec p^T & 0 & 0 & 0 
\end{array}
\right) .
\end{equation}
Here we have ordered the coordinates as $(\varphi,\vec q,\vec p,r,t)$.

Of course, we have not defined an almost complex structure near the zero-section of $T^*S^{n-1}$, but we can choose any extension, since the holomorphic curves that we shall consider, stay away from the zero-section. This is shown in Section~\ref{sec_holcurve_in_sphere_uniqueness}.

\subsubsection{Finite energy planes intersecting the binding}
Let us now parametrize a candidate finite energy plane $u$ in polar coordinates, say $(\rho,\psi)$, and make the ansatz that the curve looks like
$$
u(\rho,\psi)=(\psi,\vec q(\rho),\vec p(\rho),r(\rho),t(\rho))
$$
where we have used the same ordering of the coordinates as in Formula \eqref{eq:J_binding}.
With this ansatz, the Cauchy-Riemann equation reads
$$
u_\rho=-J\frac{1}{\rho}\frac{\partial}{\partial \psi}=\frac{1}{\rho}(h_2'\frac{\partial}{\partial r}+h_2\frac{\partial}{\partial r}).
$$
We immediately see that both $\vec q$ and $\vec p$ are constant along the holomorphic curve. The $r$ and $t$ coordinate satisfy the ordinary differential equations
\begin{equation}
\label{eq:CR_r_coor}
r_\rho=\frac{1}{\rho}h_2'(r)
\end{equation}
\begin{equation}
\label{eq:CR_t_coor}
t_\rho=\frac{1}{\rho}h_2(r).
\end{equation}
One can solve this system by first integrating the first equation and substituting the solution in the second. There are two integration constants. The one for the equations for $r$ corresponds to reparametrizations in the domain. The integration constant for the $t$ component represents the translation symmetry of the solution in the $\R$ direction of the symplectization.

Note that, although the holomorphic plane depends on the choice of $h_1$ and $h_2$, the projection to the contact manifold does not. Let us denote the holomorphic plane we obtained this way by $u_0$.

\begin{remark}
\label{rem:complicated_diff}
If we assume uniqueness of the holomorphic plane and transversality at $u_0$ (which we shall show later) we see that $\partial \gamma_0=1$. This is sufficient for vanishing contact homology, but, of course, there are different ways for this to happen as the following example shows. Choose the functions $h_1$ and $h_2$ as indicated in Figure~\ref{fig_functions_left2}. The resulting contact form is isotopic to the one we used earlier. We still find a finite energy plane with $r$ coordinate smaller than $r_0$, but in addition we find a holomorphic cylinder going from the closed Reeb orbit $\gamma_0$ at $r_0$ to a closed Reeb orbit $\gamma_1$ at $r_1$. Hence we get $\partial \gamma_0=1+\gamma_1$. But we also find a cylinder from the closed Reeb orbit $\gamma_2$ at $r_2$ to the $\gamma_1$. In fact, $\partial \gamma_2=\gamma_1$. So we still have vanishing contact homology as expected. All these holomorphic curves can be found in the same way as we found the finite energy plane, namely we can use the above ansatz.

Now we can revisit Figure~\ref{fig_function_gk} and observe that we could have chosen a decreasing slope for the function $g_k$ in our definition of Dehn-twist. This would have resulted in the existence of a degree $0$ orbit and a degree $1$ orbit for the standard sphere, which we get for right-handed Dehn twist, $k=+1$. In that case we would have found not only a finite energy plane bounding the degree $1$ orbit, but also a holomorphic cylinder between the degree $1$ and the degree $0$ orbit. The details are similar to the above example. 
\begin{figure}
\begin{center}
\include{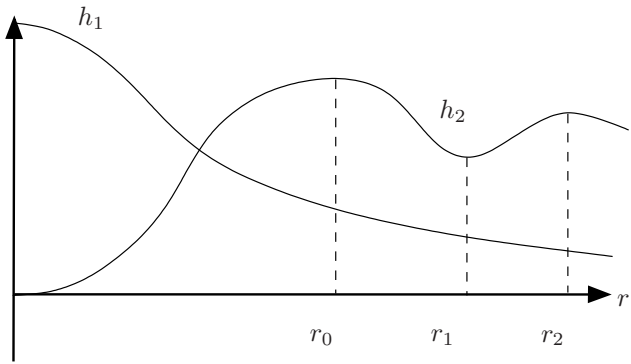}
\end{center}
\caption{Functions $h_1$ and $h_2$ for a more complicated differential}
\label{fig_functions_left2}
\end{figure}

\end{remark}

\section{Transversality for finite energy planes}
\label{sec:transversality}
In order to ensure that we get a proper curve count, we need to establish surjectivity of the linearized Cauchy-Riemann operator at $u_0$. We do this by computing the kernel of the linearized equation. We shall show that the dimension of the kernel coincides with the Fredholm index, and therefore the linearized Cauchy-Riemann operator is surjective at $u_0$.

First of all, we observe that the solution $u_0$ has constant $\vec q$ and $\vec p$ components. This is helpful when doing computations later on. Let $\Xi=(\xi_\varphi,\vec \xi_q,\vec \xi_p,\xi_r,\xi_t)$ be a section of $u_0^*T \sympl(M)$. Note that $ST^*S^{n-1}\times D^2$ can be considered as a submanifold of $\R^{2n+2}$. This allows us to do all computations in $\R^{2n+2}$. The (extended) linearized Cauchy-Riemann operator looks like
\begin{equation}
\label{eq:linCR_operator}
D\Xi=\partial_\rho \Xi+\Xi(J)_{u_0}\frac{1}{\rho}\partial_\psi u_0+J(u_0)\frac{1}{\rho}\partial_\psi \Xi.
\end{equation}
This expression only makes sense in $\R^{2n+2}$. In order to make statements about the original problem, we need to impose the condition that $\Xi$ be contained in $u_0^*T \sympl(M)$. 
Note that $\partial_\psi u_0=\frac{\partial}{\partial \varphi}$ and that both the $\vec q$ and $\vec p$ coordinates are constant along the solution $u_0$. These properties are of use to simplify the linearized equations. The asymptotic boundary conditions are most easily given in suitable cylindrical coordinates around the puncture. We describe how to do this in the following interlude.

\subsection{Asymptotic boundary conditions for the Cauchy-Riemann operator}
\label{sec:boundary_conditions}
The notions here are taken from \cite{Bourgeois:Coherent_orientations} and \cite{Dragnev:Fredholm}. 
First we shall describe the situation for a generic contact form, i.e.~a contact form for which all Reeb orbits are non-degenerate. 
We shall use Sobolev spaces with exponential weights. 
For this, we always choose cylindrical coordinates $(\rho,\psi)\in [R,\infty ) \times S^1$. 
Then we define 
$$
w\in W^{1,p}_\delta([R,\infty ) \times S^1,\R^N) \text{ if and only if } e^{\delta \rho} w \in W^{1,p}([R,\infty ) \times S^1,\R^N).
$$ 
The positive number $\delta$ has to be chosen smaller than the spectral gap of the linearized operator. 
See \cite{Dragnev:Fredholm} for an alternative description to obtain the weight factor $\delta$. 
\subsubsection{Setup for the non-degenerate case}
In this section we shall briefly describe the Banach manifold setup. 
Since we are looking at the special case of finite energy planes, we can choose global cylindrical coordinates near the puncture.
We shall take the following cylindrical coordinates $(\rho,\psi)$ for $\C$,
\begin{eqnarray*}
\R \times S^1 & \longrightarrow \C 
\end{eqnarray*}
\begin{eqnarray}
\label{eq:cylinder_coordinates}
(\rho,\psi) & \longmapsto & e^\rho e^{i\psi}
\end{eqnarray}
Note that in general the situation is more complicated than here.
Here, we will be considering the Banach manifold $\mathcal B^{1,p}_\delta(\gamma_0)$, the space of maps $u:\C \to \R \times S^{2n-1}$ such that
\begin{itemize}
\item{} $u$ is locally in $W^{1,p}$.
\item{} Written in the cylindrical coordinates of Equation~\eqref{eq:cylinder_coordinates}, the components of the map $u=(a;\vartheta,\zeta)\in \R\times \S^{2n-1}$ satisfy
$$
(a-T\rho-a_0),(\vartheta-\psi-\vartheta_0),\zeta \in W^{1,p}_\delta
$$
for some $a_0,\vartheta_0$. The coordinates $\vartheta$ and $\zeta$ are coordinates for a tubular neighborhood $S^1\times D^{2n-2}$ of the Reeb orbit $\gamma_0$ with $\vartheta$ representing the $S^1$ coordinate. The number $T$ is the action of the Reeb orbit $\gamma_0$. 
\end{itemize}

For the linearized equation, it is important to discuss the tangent space $T_u \mathcal B^{1,p}_\delta(\gamma_0)$. The latter can be identified with the vector space
$$
T_u\mathcal B^{1,p}_\delta(\gamma_0)\cong \Gamma_\delta^{1,p}(\C,u^*T(\R\times \S^{2n-1})),
$$
which is the space of sections of $u^*T(\R\times \S^{2n-1})$ that satisfy
\begin{itemize}
\item{} Each section is in $W^{1,p}_\delta(\C,u^*T(\R\times S^{2n-1}))\oplus \R \frac{\partial}{\partial t} \oplus \R R_\alpha$. 
\end{itemize}

\subsubsection{Setup for the Morse-Bott case}
In the Morse-Bott case, the setup is as follows. 
We have the Banach manifold $\mathcal B^{1,p}_{MB,\delta}(S_0)$ which is the space of maps $u:\C \to \R \times S^{2n-1}$ such that
\begin{itemize}
\item{} $u$ is locally in $W^{1,p}$.
\item{} There is a Reeb orbit $\gamma \in S_0$ and real numbers $a_0,\vartheta_0$ such that the following holds. 
In the cylindrical coordinates of Equation~\eqref{eq:cylinder_coordinates} the components of the map $u=(a;\vartheta,\zeta)\in \R\times \S^{2n-1}$ satisfy
$$
(a-T\rho-a_0),(\vartheta-\psi-\vartheta_0),\zeta \in W^{1,p}_\delta.
$$ 
The coordinates $\vartheta$ and $\zeta$ are coordinates for a tubular neighborhood $S^1\times D^{2n-2}$ of the Reeb orbit $\gamma$ with $\vartheta$ representing the $S^1$ coordinate. 
The number $T$ is the action of the Reeb orbit $\gamma$. 
\end{itemize}
For the Morse-Bott case, the tangent space $T_{u}\mathcal B^{1,p}_{MB,\delta}(S_0)$ can be identified with the vector space of sections of $u^*T(\R\times \S^{2n-1})$ that satisfy
\begin{itemize}
\item{} Each section $\Xi$ is in $W^{1,p}_\delta(\C,u_{\gamma}^*T(\R\times S^{2n-1}))\oplus \R \frac{\partial}{\partial t} \oplus \R R_\alpha \oplus T_\gamma S_0$.
The last factor, $T_{\gamma}S_0$, can be interpreted as follows. Take a vector $v \in T_{\gamma}S_0$. 
Then we can lift this vector $v$ to a section $\tilde v$ of $TS^{2n-1}$ along $\gamma$.
By taking suitable lifts (for instance as in Chapter 5 of \cite{Bourgeois:thesis} of a basis of $T_{\gamma}S_0$ the above statement makes sense;
a section $\Xi$ can be written as a linear combination of $\partial_t$, $R_\alpha$, these lifts and a section in $W^{1,p}_\delta(\C,u_{\gamma}^*T(\R\times S^{2n-1}))$.
\end{itemize}

\subsubsection{Relations between the non-degenerate case and the Morse-Bott case}
\label{sec:Morse-Bott_relation}
We can get from the Morse-Bott case to the non-degenerate case by making a small perturbation. 
If the perturbation is small enough, then there is a relation between holomorphic curves in both cases by the implicit function theorem.

In particular, for the dimensions of the kernels of the linearized equations we have the following relation
$$
\dim \ker D_{MB}=\dim \ker D_{non-deg} +\dim S_0.
$$
Here $D_{MB}$ denotes the linearized operator in the Morse-Bott case and $D_{non-deg}$ denotes the linearized operator in a nearby perturbed non-degenerate case. 
$S_0$ is the orbit space for the Morse-Bott closed orbits.

\subsubsection{Admissibility of solutions}

Since we shall directly work with the linearized differential equation, it is useful to make the following definition, which we will only apply in the non-degenerate case.
\begin{definition}
We say a solution $\Xi$ to the (linear) equation
$$
\bar \partial_J \Xi=0
$$
is {\bf admissible} if $\Xi \in W^{1,p}_\delta(\C,u^*T(\R\times S^{2n-1}))\oplus \R \frac{\partial}{\partial t} \oplus \R R_\alpha$.
\end{definition}
In other words admissible solutions are precisely those that satisfy the linearized Cauchy-Riemann equation and lie in $T_u \mathcal B^{1,p}_\delta(\gamma_0)$.

\begin{remark}
A necessary requirement for an admissible solution $\Xi$ is that there are constants $C,D$ such that
$$
(\Xi-C\frac{\partial}{\partial t}-D R_\alpha)
$$
has exponential decay. This criterion is easier to check since we can ignore the derivative. Furthermore, it turns out that this criterion is sufficient for us.
\end{remark}

In the next sections we shall be computing the kernel of operator \eqref{eq:linCR_operator}. We shall do these computations as if we had a non-degenerate contact forms. In particular, translations along the Morse-Bott orbit space are not admissible solutions.

\subsection{Kernel of the linearized Cauchy-Riemann operator}
Next, we want to compute the kernel of the operator \eqref{eq:linCR_operator}. We shall use components 
$$\Xi=(\xi_\varphi,\vec \xi_q,\vec \xi_p,\xi_r,\xi_t)$$ for Equation~\eqref{eq:linCR_operator}. Keep in mind that we have extended the Cauchy-Riemann equations to $\R^{2n+2}$, so we need to impose the conditions
$$
\vec q \cdot \vec \xi_q=0,~\vec p \cdot \vec \xi_p=0,~\vec p \cdot \vec \xi_q+ \vec q \cdot \vec \xi_p=0
$$
to ensure that a solution $\Xi$ is tangent to $T \sympl(M)$.
 This can be seen by taking the differential of Equation~\eqref{eq:STS_definition} and plugging in $\Xi$.
 Now let us look at components of $\Xi$ that are normal to the solution $u_0$. Note that the vectors
\begin{equation}
\label{eq:normal_vectors}
(0,\vec r,0,0,0) \text{ and } (0,0,\vec r,0,0)
\end{equation}
are always normal to $u_0$ and lie in the tangent space $T \sympl(M)$ if $\vec r$ is both orthogonal to $\vec q$ and $\vec p$. Here it is important to note that $\vec q$ and $\vec p$ are constant along the solution $u_0$. If we take the standard inner product on $\R^{2n+2}$ of Equation~\eqref{eq:normal_vectors} and Equation~\eqref{eq:linCR_operator}, we find the equations
$$
\vec r \cdot \partial_\rho \vec \xi_q+\frac{1}{\rho} \vec r \cdot \partial_\psi \vec \xi_p=0
$$
and
$$
\vec r \cdot \partial_\rho \vec \xi_p-\frac{1}{\rho} \vec r \cdot \partial_\psi \vec \xi_q=0.
$$
These are both standard Cauchy-Riemann equations, so their solutions are holomorphic functions. 
In a Morse-Bott setup, we actually get solutions here. Indeed, constant solutions correspond to moving the asymptotics of the holomorphic curve along the Morse-Bott orbit space. 
However, in the non-degenerate case, these solutions are \emph{not} admissible, since they have neither exponential decay nor do they correspond to either translation invariance or rotation along the Reeb orbit near the puncture. Hence we conclude that 
$$
\vec r \cdot \vec \xi_q=\vec r \cdot \vec \xi_p=0.
$$
As a result the only components which could have a non-zero solution are those along the directions $\partial_r,J\partial_r,\partial_t,R_\alpha$. This leaves the equations for $\xi_\varphi$, $\xi_{\parallel}:=\vec p \cdot \vec \xi_q=-\vec q \cdot \vec \xi_p$, $\xi_r$ and $\xi_t$, which we assemble in the system
\begin{equation}
\label{eq:system_lin_CR}
\left\{ 
\begin{array}{c}

\partial_\rho \xi_\varphi+\frac{1}{\rho}\frac{h_1}{\det H}\partial_\psi \xi_r-\frac{1}{\rho}\frac{h_1'}{\det H}\partial_\psi \xi_t=0,\\

\partial_\rho \xi_{\parallel}-\frac{1}{\rho}\frac{h_2}{\det H}\partial_\psi \xi_r+\frac{1}{\rho}\frac{h_2'}{\det H} \partial_\psi \xi_t=0,\\

\partial_\rho \xi_r-\frac{1}{\rho}h_2' \partial_\psi \xi_\varphi-\frac{1}{\rho}h_1' \partial_\psi \xi_{\parallel}=\frac{h_2''}{\rho}\xi_r,\\

\partial_\rho \xi_t-\frac{1}{\rho}h_2\partial_\psi \xi_\varphi-\frac{1}{\rho}h_1 \partial_\psi \xi_{\parallel}=\frac{h_2'}{\rho} \xi_r.
\end{array}
\right.
\end{equation}
\begin{remark}
Before we rewrite this system, observe that constant $\xi_\varphi$, $\xi_t$ and $\xi_{\parallel}$ with $\xi_r=0$ give solutions to the above system. None of these have exponential decay, and only constant $\xi_\varphi$ (rotation along a Reeb orbit) and constant $\xi_t$ (translation invariance) are admissible. Constant $\xi_{\parallel}$ does not fix the asymptotics, but corresponds to the moving along the orbit space and is hence not admissible in our sense. Note that together with non-admissible constant solutions we found previously, these vectors span the tangent space to the orbit space, which illustrates Section~\ref{sec:Morse-Bott_relation}. 
\end{remark}

With the following notation and Equation~\eqref{eq:definition_H} we simplify the above equations
$$
Y=\left( \begin{array}{c} \xi_\varphi \\ \xi_{\parallel} \end{array} \right)
\text{, and }
Z=\left( \begin{array}{c} \xi_r \\ \xi_t \end{array} \right)
,
$$
then we can rewrite the differential equation as
$$
\partial_\rho Y + \frac{1}{\rho} \partial_\psi H^{-1} Z=0
$$
$$
\partial_\rho Z -\frac{1}{\rho} \partial_\psi H Y=\frac{\xi_r}{\rho}\left( \begin{array}{c} h_2'' \\ h_2' \end{array} \right) .
$$
From now on, we shall indicate components of a vector by a subindex. If we define $\tilde Z=H^{-1} Z$, we can rewrite the latter equation (using $\partial_\rho r =h_2'/\rho$) as
$$
\partial_\rho \tilde Z -\frac{1}{\rho} \partial_\psi Y=\frac{\tilde Z_2(h_1'h_2''-h_2'h_1'')}{\det H \rho} \left( \begin{array}{c} h_1 \\ -h_2 \end{array} \right) .
$$
\begin{remark}
Here, one should observe that $H^{-1}$ is actually only defined for $r>0$ (or equivalently $\rho>0$). Hence the new system is not quite the same as the old one, but they differ only at $\rho=0$. We shall find general solutions of the new system and transform them back to see whether they are smooth at $\rho=0$. 
\end{remark}
Now put $W:=\tilde Z+iY$. This allows us to put all the remaining equations into a nice form,
\begin{equation}
\label{eq:linCR_simple}
\partial_\rho W + i \frac{1}{\rho} \partial_\psi W=\frac{\tilde Z_2(h_1'h_2''-h_2'h_1'')}{\det H \rho} \left( \begin{array}{c} h_1 \\ -h_2 \end{array} \right).
\end{equation}

\subsection{Automorphisms and symmetries}
We know that infinitesimal automorphisms of plane give rise to solutions of Equation~\eqref{eq:linCR_operator}. In addition, the translation symmetry of the symplectization also gives rise to a solution of~\eqref{eq:linCR_operator}. We assemble these solutions in the $5$-dimensional vector space $S$. It is generated by $\C^2$ valued functions that correspond to these symmetries,
$$
S:=\langle
(1,0),(i,0),(1/z,0),(i/z,0),(-h_1'/\det H,h_2'/\det H)
\rangle.
$$

\begin{lemma}
Elements of vector space $S$ are solutions to~\eqref{eq:linCR_simple} that are admissible in the sense of Section~\ref{sec:boundary_conditions}. Also, any solution $W$ to~\eqref{eq:linCR_simple} can be decomposed as
$$
W=A+B,
$$
where $A$ satisfies the exponential decay condition and $B\in S$.
\end{lemma}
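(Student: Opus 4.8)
The plan is to analyze Equation~\eqref{eq:linCR_simple} by separating variables in $\psi$, i.e.\ expanding $W$ and the source term into Fourier modes $W(\rho,\psi)=\sum_{k\in\Z}w_k(\rho)e^{ik\psi}$. For each mode the operator $\partial_\rho + i\rho^{-1}\partial_\psi$ becomes an ordinary differential operator $\partial_\rho - k\rho^{-1}$ acting on $w_k$, whose homogeneous solutions are the familiar monomials $\rho^k$ (equivalently $z^k=e^{k\rho}e^{ik\psi}$ and $\bar z^{-k}$ after returning to the original holomorphic coordinate of Equation~\eqref{eq:cylinder_coordinates}). First I would verify directly that the five listed generators of $S$ solve \eqref{eq:linCR_simple}: the pairs $(1,0),(i,0)$ are constants (they solve the homogeneous equation and have zero $\tilde Z_2$-component once one checks $\tilde Z = H^{-1}Z$ vanishes for them), $(1/z,0),(i/z,0)$ are anti-holomorphic of the lowest order so that they lie in the kernel of $\partial_\rho+i\rho^{-1}\partial_\psi$ and again contribute nothing to the right-hand side, and the last generator $(-h_1'/\det H, h_2'/\det H)$ is exactly the linearization of the family $u_0$ along the translation direction $J\partial_r$ of the symplectization — here one computes $\tilde Z = H^{-1}Z$ explicitly and sees that the source term on the right of \eqref{eq:linCR_simple} is reproduced. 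Admissibility in the sense of Section~\ref{sec:boundary_conditions} is then checked generator-by-generator: the constants correspond to the $\R\partial_t$ and $\R R_\alpha$ summands, and $1/z, i/z$ decay like $e^{-\rho}$ as $\rho\to+\infty$, hence lie in $W^{1,p}_\delta$ once $\delta$ is below the spectral gap; the last generator is a bounded function asymptotic to a combination of $\partial_t$ and $R_\alpha$, which is the form allowed by the definition of admissibility.

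For the decomposition statement, let $W$ be any solution of \eqref{eq:linCR_simple}. The source term involves only the component $\tilde Z_2 = (H^{-1}Z)_2$, which is a \emph{scalar} function; I would first extract an equation for this scalar by pairing \eqref{eq:linCR_simple} against the appropriate covector, obtaining a closed first-order equation for $\tilde Z_2$ (or more precisely for the combination appearing in $\tilde Z$) that is itself of Cauchy-Riemann type with a first-order potential term coming from $(h_1'h_2''-h_2'h_1'')/\det H$. Solving this scalar equation by Fourier modes and discarding the modes that neither decay nor match the asymptotic model pins down the non-decaying part of $\tilde Z_2$ as a multiple of the profile carried by the last generator of $S$; subtracting that multiple of the fifth generator reduces us to the case where the right-hand side of \eqref{eq:linCR_simple} decays exponentially. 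Then $W$ (after this subtraction) satisfies a homogeneous-at-infinity equation $\partial_\rho W + i\rho^{-1}\partial_\psi W = (\text{exp.\ decaying})$; a standard argument — expand in Fourier modes, note each mode $w_k(\rho)$ is forced to the behavior $\rho^k$ up to an exponentially small correction, and separate the modes $k\le 0$ (which give the holomorphic/anti-holomorphic pieces in $S$, namely the constants and $1/z$, $i/z$) from the modes $k\ge 1$ (which are killed by the exponential weight or grow and are therefore excluded by admissibility/finite energy) — yields $W = A + B$ with $A\in W^{1,p}_\delta$ decaying exponentially and $B$ a linear combination of the four ``symmetry'' generators together with the fifth generator already peeled off.

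The main obstacle I anticipate is the bookkeeping around the source term and the change of variables $\tilde Z = H^{-1}Z$: the system \eqref{eq:linCR_simple} is not quite equivalent to the original \eqref{eq:system_lin_CR} because $H^{-1}$ degenerates at $\rho=0$ (as flagged in the remark preceding \eqref{eq:linCR_simple}), so after finding solutions of the $W$-equation one must transform back via $Z = H\tilde Z$ and check smoothness at the puncture $\rho=0$, which will exclude precisely the unwanted Fourier modes and is the step where the lowest-order anti-holomorphic generators $1/z$, $i/z$ (rather than $z^{-k}$ for $k\ge 2$) get singled out. A secondary technical point is making the spectral-gap/weight choice $\delta$ consistent with the asymptotic operator at $\gamma_0$, but this is routine given \cite{Dragnev:Fredholm}. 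Once these are handled, the two claims of the lemma follow from the Fourier-mode analysis above.
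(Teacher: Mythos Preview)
Your proposal is aimed at a stronger statement than the lemma actually asserts. The paper's proof of \emph{this} lemma is quite short: the five generators of $S$ are identified geometrically as the pushforwards under $Tu_0$ of the four infinitesimal automorphisms of $\C$ (scaling $z\mapsto az$ gives $(1,0),(i,0)$; translation $z\mapsto z+b$ gives $(1/z,0),(i/z,0)$) together with the $\R$-translation of the symplectization (the fifth generator). Admissibility follows by inspecting the asymptotics, and the decomposition claim is then essentially the definition of admissibility: an admissible section is by construction an element of $W^{1,p}_\delta$ plus a constant combination of $\partial_t$ and $R_\alpha$, and those two directions are (up to exponentially decaying corrections) the fifth generator and $(i,0)$ respectively. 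The lemma does \emph{not} assert that the decaying piece $A$ must vanish; that is the content of the subsequent Lemmas~\ref{lemma:psi-dependence} and~\ref{lemma:solutionslinCR}.

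Your Fourier-mode outline is really a sketch of those later lemmas. In that direction it is broadly correct --- and the paper itself remarks that the Salamon--Zehnder-type Lemma~\ref{lemma:psi-dependence} can be bypassed by treating all modes directly, as you propose --- but there is a genuine gap in your handling of the second component $W_2$. The source term in~\eqref{eq:linCR_simple} involves $\tilde Z_2 = \re W_2$, so the equation for $W_2$ is real-linear, not complex-linear; you cannot ``extract a closed scalar equation of Cauchy--Riemann type for $\tilde Z_2$'' as you suggest. What one actually obtains for each Fourier mode of $W_2$ is a coupled $2\times 2$ real ODE system, and the delicate case is $k=\pm 1$: there the paper's Lemma~\ref{lemma:solutionslinCR} carries out a phase-plane analysis to show that any solution smooth at $\rho=0$ necessarily blows up as $\rho\to\infty$. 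Your assertion that ``modes $k\ge 1$ grow and are therefore excluded'' skips precisely this step --- the growth is not visible from the homogeneous behavior $\rho^k$ alone because the real-linear coupling mixes the expanding and contracting directions.

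Two small corrections: $1/z$ is holomorphic, not anti-holomorphic (the operator $\partial_\rho + i\rho^{-1}\partial_\psi$ is of $\bar\partial$-type in the polar coordinates used here). And the constants $(1,0),(i,0)$ arise from the automorphisms $z\mapsto az$, not directly from $\partial_t$ and $R_\alpha$; it is only asymptotically that they coincide with the $\partial_t$ and $R_\alpha$ summands allowed by the admissibility condition.
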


In polar coordinates $(\rho,\psi)$ infinitesimal automorphisms of the plane are given by
$$
(\rho,0) \text{ and } (0,1)
$$
which correspond to automorphisms of the form $z\mapsto az$, and by
$$
(\cos \psi,-\frac{\sin \psi}{\rho}) \text{ and } (\sin\psi,\frac{\cos \psi}{\rho}),
$$
which correspond to automorphisms of the form $z\mapsto z+b$
We can map these infinitesimal automorphisms to solutions of Equation~\eqref{eq:linCR_operator} by applying $Tu_0$. In terms of the transformed system~\eqref{eq:linCR_simple}, we get solutions
$$
W=(1,0) \text{ and } W=(i,0)
$$
corresponding to $z\mapsto az$ and solutions
$$
W=(\cos\psi/\rho-i\sin\psi/\rho,0)=(1/z,0) \text{ and }W=(i\cos\psi/\rho+\sin\psi/\rho,0)=(i/z,0)
$$
corresponding to $z\mapsto z+b$.
In addition, the translation symmetry of the symplectization also gives rise to a solution
$$
W=(-h_1'/\det H,h_2'/\det H)
$$
to the Equation~\eqref{eq:linCR_simple}. 

In the next section we shall show that elements in $\ker D_{u_0}$ correspond to elements in $S$. 
This implies surjectivity of $D_{u_0}$, because then the kernel of $D_{u_0}$ has the same dimension as the index. 
Indeed, we can apply Theorem $9$,~formula~(23), from Dragnev~\cite{Dragnev:Fredholm} to see that the index is $5$. Here we can use that $\mu(\gamma_0)+n-3=1$ following our computations in Section~\ref{sec_chain_complex}. 

Alternatively, we know the virtual dimension of the moduli space of finite energy planes bounding the Reeb orbit $\gamma_0$ is given by $1$. Since the automorphism group of the plane is $4$-dimensional, the index of the corresponding Fredholm problem is $5$.

\subsection{No other solutions}
Here we adapt an argument due to Salamon and Zehnder~\cite{SalamonZehnder:Morse}, Proposition~4.2. The result is slightly different from that of Salamon and Zehnder: we show that under suitable assumptions there exist only certain $\psi$-dependent solutions to the linearized Cauchy-Riemann equation. This will restrict the behavior of any solution to the linearized Cauchy-Riemann equation considerably. To state the result, we need to define the following averages. Let $W$ be a function from $\R \times S^1\to \R^{2n}$. Then we define
$$
W_k(\rho,\psi):=e^{ik\psi} \int_{0}^{2\pi} W(\rho,\psi') e^{-ik\psi'} d\psi'.
$$
In other words, we simply pick out the $k$-th Fourier component of $W$. By looking at Parseval's identity, we immediately get the following lemma.
\begin{lemma}
\label{lemma:SZlike_inequality}
Let $W$ be a function from $\R \times S^1\to \R^{2n}$ and define
$$
\bar W=W-W_0-W_1-W_{-1}.
$$
Then 
$$
2 \| \bar W \|\leq \| \partial_\psi \bar W \|.
$$
\end{lemma}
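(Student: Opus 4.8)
The plan is to expand $W$ in its Fourier series in the $\psi$ variable and apply Parseval's identity termwise. Writing $W(\rho,\psi)=\sum_{k\in\Z} c_k(\rho) e^{ik\psi}$ with $c_k(\rho)\in\C^{2n}$ (here I view $\R^{2n}\cong\C^n$, or simply treat each real component separately), the definition of $W_k$ in the statement just isolates the $k$-th Fourier mode: $W_k(\rho,\psi)=2\pi\, c_k(\rho)e^{ik\psi}$ up to the normalization of the integral. Consequently $\bar W=W-W_0-W_1-W_{-1}$ is exactly the part of the Fourier series supported on frequencies $|k|\geq 2$, i.e. $\bar W(\rho,\psi)=\sum_{|k|\geq 2} c_k(\rho)e^{ik\psi}$ (with the appropriate $2\pi$ factor absorbed).

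The key computation is then the following. For fixed $\rho$, Parseval gives
$$
\|\bar W(\rho,\cdot)\|_{L^2(S^1)}^2 = 2\pi \sum_{|k|\geq 2} |c_k(\rho)|^2,
\qquad
\|\partial_\psi \bar W(\rho,\cdot)\|_{L^2(S^1)}^2 = 2\pi \sum_{|k|\geq 2} k^2\,|c_k(\rho)|^2.
$$
Since $k^2\geq 4$ whenever $|k|\geq 2$, we get $\|\partial_\psi\bar W(\rho,\cdot)\|_{L^2(S^1)}^2 \geq 4\,\|\bar W(\rho,\cdot)\|_{L^2(S^1)}^2$, hence $2\|\bar W(\rho,\cdot)\|_{L^2(S^1)}\leq \|\partial_\psi\bar W(\rho,\cdot)\|_{L^2(S^1)}$ for every $\rho$. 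Integrating the square of this pointwise (in $\rho$) inequality over $\R$ — or over whatever $\rho$-interval is implicit in the norm $\|\cdot\|$ used in the surrounding Fredholm setup (an $L^2$ or weighted $L^2$ norm on $\R\times S^1$) — and taking square roots yields $2\|\bar W\|\leq \|\partial_\psi\bar W\|$, which is the claim. The only point to check is that the $\rho$-weight (if a weighted norm $W^{1,p}_\delta$-type norm is in force) does not interfere: it does not, because the weight depends only on $\rho$ and factors out of the $\psi$-integral, so the pointwise-in-$\rho$ estimate survives integration against any nonnegative weight.

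There is no real obstacle here; the lemma is a packaging of the spectral gap of $-i\partial_\psi$ on $S^1$ restricted to the subspace of mean-zero, frequency-$\geq 2$ functions. The only mild care needed is bookkeeping: confirming that the normalization in the definition of $W_k$ matches the Parseval normalization (a factor of $2\pi$ that cancels on both sides of the inequality and is therefore harmless), and making sure the norm $\|\cdot\|$ appearing in the statement is the global $L^p$ (here $p=2$) norm on the cylinder so that Parseval applies slicewise and then integrates. I would state the proof in two sentences: "Expand $W$ in Fourier modes in $\psi$; then $\bar W$ retains only modes with $|k|\geq 2$, and the estimate follows from Parseval's identity since $k^2\geq 4$ on these modes."
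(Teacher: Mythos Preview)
Your proposal is correct and matches the paper's approach exactly: the paper simply states that the lemma follows immediately from Parseval's identity, which is precisely the Fourier-expansion argument you give (modes with $|k|\geq 2$ satisfy $k^2\geq 4$). Your additional remark that any $\rho$-dependent weight factors through the slicewise estimate is also correct and consistent with how the lemma is used later.
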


Before we continue, we shall first introduce a weight function that it is adapted to our functional-analytic setup. Let us first choose two constants $\rho_0<\rho_\infty$. Then define a smooth function $w:\R \to \R$ with the following properties:
\begin{itemize}
\item{} For $\rho<\rho_0$, we put $w(\rho)=2$.
\item{} For $\rho>\rho_\infty$, we put $w(\rho)=\delta$.
\end{itemize}
We use the measure $e^{w(\rho)} d\rho d\psi $ as weight on $\R \times S^1$. 
Note that this gives a weight on $\C$ via the map given in Formula~\eqref{eq:cylinder_coordinates}.
The induced weight is standard near $0\in \C$, but far away from $0$, this weight corresponds to an asymptotic weight $e^{\delta \rho}$ in cylindrical coordinates.

The next lemma provides the main argument to show that solutions to Equation~\eqref{eq:linCR_simple} must have a special form. Consider the operator $F$
\begin{eqnarray*}
F:~W^{1,2}_\delta (\C;\R^{2n}) & \longrightarrow & L^{2}_\delta(\C;\R^{2n}) \\
W & \longmapsto & \partial_\rho W+J_0 \partial_\psi W-A W.
\end{eqnarray*}
Here we use cylindrical coordinates $(\rho,\psi)$ for $\C$ from Equation~\eqref{eq:cylinder_coordinates}. 
As for the assumptions of the lemma, note that by modifying the functions $h_1$ and $h_2$ that appear in the definition of the contact form, we can always arrange that $\| A \| <C$ for any constant $C>0$.
Indeed, we see from Equation~\ref{eq:linCR_simple} that scaling $h_1$ and $h_2$ changes the norm of $A$.

\begin{lemma}
\label{lemma:psi-dependence}
Let $W$ be an element in $\ker F$ with with $W_0=0$ and $W_{\pm 1}=0$. Suppose that $A$ only depends on $\rho$ and $\| A \| <2$. Then $W=0$.
\end{lemma}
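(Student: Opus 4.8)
The strategy is an energy (or $L^2$) estimate combined with the Fourier decomposition of Lemma~\ref{lemma:SZlike_inequality}. Write $W \in \ker F$, so $\partial_\rho W + J_0\partial_\psi W = AW$, and by hypothesis $W$ has no zeroth or $\pm 1$ Fourier modes, so $W = \bar W$ in the notation of Lemma~\ref{lemma:SZlike_inequality} and therefore $2\|W\| \le \|\partial_\psi W\|$ on every circle $\{\rho\}\times S^1$. The plan is to compute $\frac{d}{d\rho}\|W(\rho,\cdot)\|^2$ (or rather the $\rho$-derivative of a suitably weighted $L^2$ norm using the measure $e^{w(\rho)}d\rho\,d\psi$ introduced above), express it via the equation, and show that the resulting differential inequality forces $W$ to either blow up or vanish; since $W$ lies in the weighted Sobolev space it cannot blow up, hence $W \equiv 0$.

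Concretely, I would first establish the pointwise identity $\partial_\rho\langle W, W\rangle = 2\langle \partial_\rho W, W\rangle = 2\langle AW - J_0\partial_\psi W, W\rangle$; integrating over $\psi \in S^1$, the term $\int \langle J_0 \partial_\psi W, W\rangle\,d\psi$ vanishes because $J_0$ is skew-adjoint and $\partial_\psi$ integrates to zero over the circle (integration by parts), leaving $\frac{d}{d\rho}\|W(\rho,\cdot)\|^2 = 2\int\langle AW, W\rangle\,d\psi$. Next I would control $\|\partial_\psi W\|$ from below using $2\|W\|\le\|\partial_\psi W\|$ and, crucially, combine this with a second-derivative estimate: differentiating again or applying the equation to $\partial_\psi W$ (which also lies in $\ker F$ modulo commuting $A$ past $\partial_\psi$, using that $A$ depends only on $\rho$), one obtains a convexity-type inequality for a function like $f(\rho) = \|W(\rho,\cdot)\|^2$ of the shape $f'' \ge (4 - \|A\|^2 - \text{l.o.t.})f \ge c\, f$ for some $c>0$, using the spectral gap $\|A\|<2$. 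A convex positive function on $[R,\infty)$ that does not blow up must be nonincreasing and in fact is forced to zero by the weighted integrability (the weight behaves like $e^{\delta\rho}$ at infinity with $\delta$ below the spectral gap, which is exactly what makes the exponentially decaying solution space rule out any nontrivial such $f$). Together with the behavior as $\rho \to -\infty$ (i.e.\ near $0 \in \C$, where the weight is $e^{2\rho}$), this pins $W$ down to $0$.

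The main obstacle I expect is the bookkeeping in the second step: getting a clean differential inequality of the form $f'' \ge c f$ requires carefully tracking the cross terms coming from $A$ (it is a matrix, not a scalar, so $\langle AW, W\rangle$ is not simply $\|A\|\|W\|^2$) and from the $\rho$-dependence of $A$ and of the weight $w(\rho)$, which is non-constant on the transition region $[\rho_0,\rho_\infty]$. The Salamon--Zehnder argument being adapted (Proposition~4.2 of \cite{SalamonZehnder:Morse}) handles essentially this computation, so the work is to check that the hypothesis $\|A\|<2$ is exactly the threshold that beats the factor $2$ in Lemma~\ref{lemma:SZlike_inequality} (the eigenvalue $\lambda_k = k$ of $-J_0\partial_\psi$ on the $k$-th Fourier mode satisfies $|\lambda_k|\ge 2$ precisely when $|k|\ge 2$, which is why the $k=0,\pm1$ modes must be excluded by hypothesis). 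Once the inequality $f'' \ge (4-\|A\|^2)f - (\text{controlled})$ is in hand with a strictly positive coefficient, the conclusion $W=0$ is a standard ODE/maximum-principle argument in the weighted space.
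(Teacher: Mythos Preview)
Your first concrete step already fails: the integral $\int_{S^1}\langle J_0\partial_\psi W, W\rangle\,d\psi$ does \emph{not} vanish in general. Both $J_0$ and $\partial_\psi$ are skew-adjoint on $L^2(S^1,\R^{2n})$, so their composition $J_0\partial_\psi$ is \emph{self}-adjoint, and its quadratic form is typically nonzero. For instance, if $W(\psi)=a\cos k\psi+b\sin k\psi$ with $a,b\in\R^{2n}$, a direct computation gives $\int_{S^1}\langle J_0\partial_\psi W,W\rangle\,d\psi=-2\pi k\,\langle J_0 a,b\rangle$. Hence your formula $f'(\rho)=2\int\langle AW,W\rangle\,d\psi$ is incorrect, and the convexity inequality $f''\geq(4-\|A\|^2)f$ does not follow from the computation you outline. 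A slice-by-slice ODE argument of this shape can be made to work when the asymptotic operator $-J_0\partial_\psi+A$ is self-adjoint, but that requires $A=A^T$, which you neither check nor assume; without it the cross terms you dismiss as ``bookkeeping'' do not have a definite sign.

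The paper's proof is also not the convexity argument you describe; it is the actual Salamon--Zehnder estimate, carried out as a single global weighted $L^2$ inequality rather than an ODE on circles. One bounds $\|\partial_\psi W\|^2\leq\|\nabla W\|^2$ with the norm taken over all of $\R\times S^1$ against the measure $e^{w(\rho)}d\rho\,d\psi$, then integrates by parts to rewrite $\|\nabla W\|^2$ as $\|\bar\partial_J W\|^2$ plus a correction controlled by $\max\partial_\rho w$, which can be made arbitrarily small by choosing $w$. Since $\bar\partial_J W=AW$ on $\ker F$, this yields $\|\partial_\psi W\|^2\leq\|AW\|^2+2\|AW\|\,\|D_{u_0}W\|+\tfrac{1}{2}\max(\partial_\rho w)\,\|\partial_\psi W\|^2$; now Lemma~\ref{lemma:SZlike_inequality} gives $\|AW\|\leq\|A\|\,\|W\|\leq\tfrac{\|A\|}{2}\|\partial_\psi W\|$, so one arrives at $\|\partial_\psi W\|^2\leq\bigl(\|A\|^2/4+\tfrac{1}{2}\max\partial_\rho w\bigr)\|\partial_\psi W\|^2$ with coefficient strictly less than $1$. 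This forces $\partial_\psi W=0$, and together with $W_0=0$ one concludes $W=0$. No second $\rho$-derivative or convexity is used.
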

\begin{proof}
The proof consists mostly of the argument due to Salamon and Zehnder, but we need to take the exponential weights into account. We shall use the measure of the form $e^{w(\rho)}d\rho d\psi$ that we just described. Note that we can assume the derivative $\max \partial_\rho w$ to be arbitrarily small by choosing a suitable function $w$.

The goal is to show that the function $W$ does not depend on $\psi$, which implies that it vanishes.
\begin{align*}
\| \partial_\psi W \|^2 &\leq \| \nabla W \|^2\\
~& =\int \sum_i \langle \partial_i W,\partial_i W \rangle e^{w(\rho)}d\rho d\psi\\
~& =-\int\sum_i\left( \langle W,\partial_i^2 W \rangle + \langle W,\partial_i w \partial_i W \rangle \right) e^{w(\rho)}d\rho d\psi \\
~& =-\int\left( \langle W,\partial_J \bar \partial_J W \rangle + \langle W,\partial_\rho w \partial_\rho W \rangle \right) e^{w(\rho)}d\rho d\psi \\
~& =\int\left( \langle \bar \partial_J W,\bar \partial_J W \rangle +\langle \partial_\rho w W,\bar \partial_J W \rangle- \langle W,\partial_\rho w \partial_\rho W \rangle \right) e^{w(\rho)}d\rho d\psi \\
~& =\int \left( |\bar \partial_J W|^2+ \langle W,\partial_\rho w J_0 \partial_\psi W \rangle \right) e^{w(\rho)}d\rho d\psi \\
~& \leq \| \bar \partial_J W \|^2+\| W \| \| \partial_\psi W \| \max \partial_\rho w \\
~& \leq \| D_{u_0} W \|^2 + 2\| D_{u_0} W\| \| A W\|  + \| A W\|^2 + \| \partial_\psi W \|^2 \max \partial_\rho w /2 \\
~& \leq \left( \| A \|^2 /4+\max \partial_\rho w /2 \right ) \| \partial_\psi W \|^2.
\end{align*}
In the last two steps we have used Lemma~\ref{lemma:SZlike_inequality}.
Finally, the factor $\| A \|^2 /4+\max \partial_\rho w /2$ is smaller than $1$ by assumption. Hence $\| \partial_\psi W \|=0$, which implies the claim given the assumptions.
\end{proof}

This lemma implies that the only solutions to Equation~\eqref{eq:linCR_simple} are those that are $\psi$-independent and those that have a Fourier series with terms of the form $\cos \psi$, $\sin \psi$. Higher frequency terms can never be solutions. Therefore we see that the kernel of the linearized operator~\eqref{eq:linCR_operator} is at most $12$-dimensional. We shall now show that the actual kernel is only $5$-dimensional, i.e.~it only consists of elements in $S$.

\begin{lemma}
\label{lemma:solutionslinCR}
Any solution $W$ to Equation~\eqref{eq:linCR_simple} that satisfies the boundary conditions lies in $S$.
\end{lemma}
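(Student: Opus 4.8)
The plan is to exploit the Fourier decomposition already set up, together with the two auxiliary lemmas. By Lemma~\ref{lemma:SZlike_inequality} and the remark following Lemma~\ref{lemma:psi-dependence}, any solution $W$ to Equation~\eqref{eq:linCR_simple} that satisfies the exponential-decay boundary condition splits as a finite sum of Fourier modes $W = W_0 + W_1 + W_{-1}$; higher modes are killed because the argument of Lemma~\ref{lemma:psi-dependence} (after arranging $\|A\|<2$ by scaling $h_1,h_2$) forces them to vanish. So it suffices to treat the three modes $W_0$, $W_1$, $W_{-1}$ separately and show in each case that the admissible solutions are exactly those spanned by the five generators of $S$.

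First I would handle the $\psi$-independent mode $W_0=W_0(\rho)$. Plugging a $\psi$-independent ansatz into~\eqref{eq:linCR_simple}, the left-hand side becomes $\partial_\rho W_0$, so $W_0$ must be a solution of a first-order linear ODE in $\rho$ with the inhomogeneous term $\tfrac{\tilde Z_2(h_1'h_2''-h_2'h_1'')}{\det H\,\rho}(h_1,-h_2)$. One particular solution is precisely the symplectization-translation vector $(-h_1'/\det H,\ h_2'/\det H)$; I would verify this by direct substitution, using $\partial_\rho r = h_2'/\rho$ and the definition of $\det H$. The homogeneous solutions are $\psi$-independent constants in $W$, i.e.\ constant $\tilde Z$ and $\tilde Z_2$; of these, translating back via $Z = H\tilde Z$, only the ones corresponding to constant $\xi_t$ and constant $\xi_\varphi$ give admissible (exponentially-weighted, or rotation/translation) solutions — and constant $\xi_\varphi$ is $(i,0)$, constant $\xi_t$ is (a multiple of) $(1,0)$ after transforming. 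The constant $\xi_\parallel$ direction is the non-admissible orbit-space direction, which must be discarded exactly as in the Remark after~\eqref{eq:system_lin_CR}. So the $W_0$ part contributes $\langle (1,0),(i,0),(-h_1'/\det H,h_2'/\det H)\rangle$.

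Next the modes $W_{\pm1} = e^{\pm i\psi} c_{\pm}(\rho)$. Substituting $W = e^{i\psi}c(\rho)$ into~\eqref{eq:linCR_simple}: the operator $\partial_\rho + i\rho^{-1}\partial_\psi$ acts as $c' - \rho^{-1}c$, an Euler-type operator, while the $e^{-i\psi}$-mode $W_{-1}$ gives $c'+\rho^{-1}c$. Since the right-hand side of~\eqref{eq:linCR_simple} only involves $\tilde Z_2$ — which I would show has no $e^{\pm i\psi}$ component once the orbit-space/$W_0$ piece is removed, because the inhomogeneity is tied to the $\psi$-independent part of $\xi_r$ — these modes satisfy the \emph{homogeneous} equation. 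The solutions are then $c(\rho) = (\text{const})\cdot\rho$ for $W_1$ and $c(\rho)=(\text{const})\cdot\rho^{-1}$ for $W_{-1}$; translating back, $\rho e^{i\psi}=z$ and $\rho^{-1}e^{i\psi}$-type terms give exactly $(1,0)z$-scaling (the automorphism $z\mapsto az$, already counted in $W_0$ as $(1,0),(i,0)$ — careful bookkeeping needed here, the $z$-dilation is the real-linear combination of a $\psi$-independent piece and, no: dilation $z\mapsto az$ pushes forward to a $\psi$-independent $W$, so it sits in $W_0$; it is the \emph{translations} $z\mapsto z+b$ that live in $W_{\pm1}$) and $(1/z,0),(i/z,0)$. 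The growth of the $\rho^{+1}$ solution must be matched against the weight: it is admissible because it corresponds to a genuine automorphism, not because it decays. I would be careful to check that the $\rho^{+1}$ mode is \emph{not} excluded by the exponential weight — this is the subtle point, and it is handled by noting such solutions lie in $S$, i.e.\ in the $\R\partial_t\oplus\R R_\alpha\oplus(\text{automorphisms})$ part of the tangent space, not in the weighted Sobolev part.

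The main obstacle I anticipate is the bookkeeping at $\rho = 0$: the change of variables $\tilde Z = H^{-1}Z$ is only valid for $r>0$ (Remark after~\eqref{eq:linCR_simple}), so after finding the general solution of the transformed system one must transform back by $Z = H\tilde Z$ and check smoothness at the origin $\rho=0$ — this is what eliminates, e.g., the $\rho^{-1}$ homogeneous solution in the wrong slot or forces specific combinations, and it is where the $1/z$ solutions $(1/z,0),(i/z,0)$ must be shown to actually extend smoothly (they do, as pushforwards of the smooth automorphisms $z\mapsto z+b$). Combining the three modes and discarding every non-admissible (orbit-space-translation) constant gives exactly the five-dimensional span $S = \langle(1,0),(i,0),(1/z,0),(i/z,0),(-h_1'/\det H,h_2'/\det H)\rangle$, proving the lemma.
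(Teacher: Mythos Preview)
Your overall architecture---reduce to the Fourier modes $W_0,W_1,W_{-1}$ via Lemma~\ref{lemma:psi-dependence}, then analyze each mode as an ODE in $\rho$---is exactly the paper's strategy. The treatment of $W_0$ is fine. The gap is in your handling of the $W_{\pm1}$ modes of the \emph{second} component.

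You write that the right-hand side of~\eqref{eq:linCR_simple} ``only involves $\tilde Z_2$, which I would show has no $e^{\pm i\psi}$ component,'' and conclude that the $W_{\pm1}$ modes satisfy the homogeneous equation. But $\tilde Z_2$ is not auxiliary data: by definition $W=\tilde Z+iY$, so $\tilde Z_2=\re(W^{(2)})$ is the real part of the very component you are trying to solve for. If $W^{(2)}$ has a $\cos\psi$ or $\sin\psi$ term, so does $\tilde Z_2$, and the right-hand side is nonzero for those modes. The claim that the inhomogeneity is ``tied to the $\psi$-independent part of $\xi_r$'' is therefore circular. Concretely, writing $W^{(2)}=b_0+c(\rho)\cos\psi+d(\rho)\sin\psi$ with $c,d$ complex, Equation~\eqref{eq:linCR_simple} gives the coupled system
\[
c'+\tfrac{i}{\rho}d=\tfrac{H_2}{\rho}\,\re c,\qquad d'-\tfrac{i}{\rho}c=\tfrac{H_2}{\rho}\,\re d,
\]
with $H_2=-h_2(h_1'h_2''-h_2'h_1'')/\det H$; this is \emph{not} an Euler equation. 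The paper disposes of it by a phase-plane argument: one checks that the $2\times2$ system for $(\re c,\im d)$ has eigenvectors forcing every nonzero trajectory into the first (or third) quadrant, where $\re c$ then runs off to infinity as $\rho\to\infty$. That growth is incompatible with the weighted boundary conditions, so $c=d=0$. Only after this does the first-component equation become homogeneous and yield the $(1/z,0),(i/z,0)$ solutions.

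A smaller point: the $\rho^{+1}$ mode you mention (the coefficient $a_1$ in the paper's notation) does \emph{not} correspond to any automorphism of the plane. The dilations $z\mapsto az$ push forward to the \emph{constant} solutions $(1,0),(i,0)$, and the translations $z\mapsto z+b$ give $(1/z,0),(i/z,0)$; nothing in $S$ grows like $z$. The $a_1z$ term is ruled out purely by the exponential weight, as the paper notes.
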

\begin{proof}
The main idea is to use the Fourier decomposition for a solution, which will transform the partial differential equation into an ordinary differential equation. This ordinary differential equation will then show that solutions that do not lie in $S$ and that are smooth near $\rho=0$ must explode for large $\rho$. This is incompatible with the boundary conditions.

Let $W$ be a solution to Equation~\eqref{eq:linCR_simple}. We can simplify the computations by assuming that $h_1=1-r^2$ and $h_2=r^2$ for small $r$. This means that $W$ is holomorphic near $\rho=0$. If we forget about boundary conditions then  we see that the equation has a $12$-dimensional solution space by using Lemma~\ref{lemma:psi-dependence}. These have the form
$$
W=\left(
\begin{array}{c}
a_0+a_1 z+a_{-1}z^{-1}\\
0
\end{array}
\right)
\text{ and near }\rho=0 \text{, }
W=\left(
\begin{array}{c}
0 \\
b_0+b_1 z+b_{-1}z^{-1}
\end{array}
\right) .
$$ 
The solutions $S$ to Equation~\eqref{eq:linCR_simple} correspond to linear combinations of the above form with $a_{1}=b_{-1}=b_{1}=0$ and $b_0$ real. Note that in order to for any solution to satisfy the boundary conditions we need $a_1=0$. Also note that $b_{-1}=0$ for a smooth solution. We shall now argue that, for solutions satisfying the boundary conditions, $b_0$ is real and $b_1=0$. This will imply that a solution lies in $S$.

We see that $b_0$ is real, because the function
$$
W=\left(
\begin{array}{c}
0 \\
i
\end{array}
\right)
$$
does not satisfy the boundary conditions, but it does satisfy Equation~\eqref{eq:linCR_simple} everywhere. Let us now consider the Fourier transform of Equation~\eqref{eq:linCR_simple}. We shall only need the part corresponding to $W_2$. If we write $W_2=b_0+c(\rho) \cos \psi +d(\rho) \sin(\psi)$, we see
$$
\partial_\rho c+i/\rho d=-h_2\frac{h_1'h_2''-h_2'h_1''}{\rho \det H} \re c
$$
$$
\partial_\rho d-i/\rho c=-h_2\frac{h_1'h_2''-h_2'h_1''}{\rho \det H} \re d.
$$
From the above we know that $\re c= \im d$ and $\im c=-\re d$ near $\rho=0$. Define 
$$
H_2=-h_2(h_1'h_2''-h_2'h_1'')/{\det H}.
$$ 
Let us consider the equation for $(\re c,\im d)$, which can be written as
$$
\left(
\begin{array}{c}
\re c\\
\im d
\end{array}
\right) '=
\left(
\begin{array}{cc}
H_2/\rho & 1/\rho \\
1/\rho & 0
\end{array}
\right)
\left(
\begin{array}{c}
\re c\\
\im d
\end{array}
\right).
$$
Now look at the phase plane of this system for varying $\rho$, see Figure~\ref{fig:phaseplane}.
\begin{figure}
\begin{center}
\include{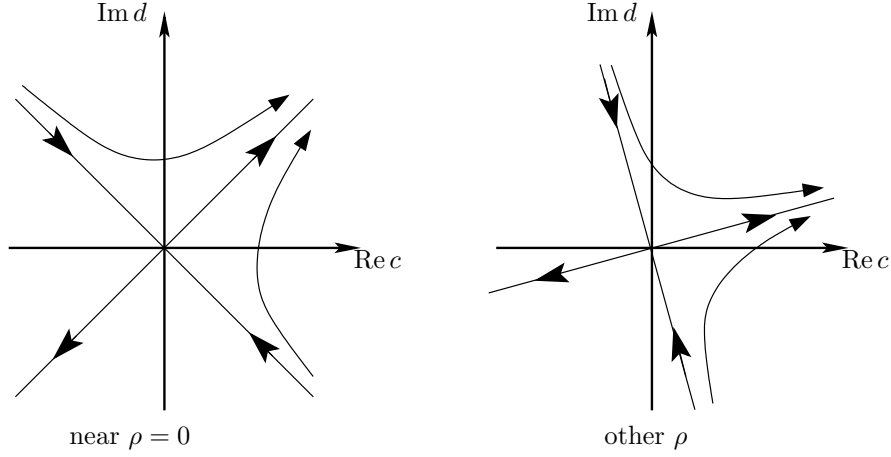}
\end{center}
\caption{Phase plane for $\re c$ and $\im d$; the cross indicates the eigenvectors for contracting and expanding directions. Note that all flow lines point into the first quadrant}
\label{fig:phaseplane}
\end{figure}
Indeed, we see that any solution that starts in the first quadrant, i.e.~$\re c>0$ and $\im d>0$, stays there. We also see that the $\re c$ necessarily goes to infinity for large $\rho$. This can be deduced from the eigenvectors of the matrix
$$
\left(
\begin{array}{cc}
H_2 & 1 \\
1 & 0
\end{array}
\right)
,
$$
which are given by
$$
\left(
\begin{array}{c}
1  \\
-H_2/2 +\sqrt{H_2^2/4+1}  
\end{array}
\right)
\text{ for expanding eigenvalue }
H_2/2 +\sqrt{H_2^2/4+1},
$$
and
$$
\left(
\begin{array}{c}
1  \\
-H_2/2 -\sqrt{H_2^2/4+1}  
\end{array}
\right)
\text{ for contracting eigenvalue }
H_2/2 -\sqrt{H_2^2/4+1}.
$$
This behavior of the eigenvalues shows that all flow lines point into the first quadrant for all values of $\rho$. In particular, it follows that $\re c=0$ and hence $\im d=0$. For the other pair of Fourier coefficients, $\im c$ and $\re d$ a similar argument holds.
\end{proof}

\begin{remark}
Lemma~\ref{lemma:psi-dependence} is in fact not really necessary here, since the proof of Lemma~\ref{lemma:solutionslinCR} can be extended to all Fourier coefficients. This is a bit clumsier and the method of Lemma~\ref{lemma:psi-dependence} might have a much wider range of applications.
\end{remark}

\subsection{Transversality for other curves}
The above computation shows that we have transversality at the curve that we found explicitly. In the next section we shall show that there are no other finite energy planes nor any curves with $\gamma_0$ at the positive puncture. This means that $\partial \gamma_0=1$ and also $\partial^2 \gamma_0=0$.

To get $\partial^2=0$ for all products of Reeb orbits, we need to choose some perturbation scheme. In general, we can not simply perturb $J$ to obtain a differential. We note here that there are several perturbation schemes in preparation \cite{Hofer:Polyfolds,HWZ:PolyfoldsI,HWZ:PolyfoldsII} and \cite{CM:GW_Transversality} that should provide the required identity $\partial^2=0$. Because we have shown transversality for curves involving $\partial \gamma_0$, the choice of any perturbation scheme does not affect $\partial \gamma_0=1$ if the perturbation can be chosen small enough.

\section{Contact homology of left-handed Dehn twists}
\label{sec:uniqueness}
In the following two sections, we shall show that, under suitable circumstances, there are no other planes bounding the orbit $\gamma_0$. 
In order to see that there are no other holomorphic curves with $\gamma_0$ at the positive puncture, we simply use an energy argument. Since $\gamma_0$ is the only closed orbit that has linking number $1$ with the binding, we just adjust the functions $h_1$, $h_2$ and possibly $g_k$ to ensure that $\gamma_0$ is the orbit with lowest action. That shows that $\gamma_0$ cannot bound holomorphic curves other than planes. Modulo the uniqueness result of the planes, we have that
$$
\partial \gamma_0=\pm 1.
$$
This implies that the contact homology algebra vanishes. Indeed, if $\gamma$ represents an element in contact homology, then 
$$
\partial (\gamma_0 \gamma)=(\partial \gamma_0)\gamma+(-1)^{\deg \gamma} \gamma_0( \partial \gamma)=\pm \gamma.
$$
Hence every element $\gamma$ that represents a cycle is already a boundary, so the contact homology algebra is trivial.

\subsection{Uniqueness of finite energy planes bounding the orbit $\gamma_0$ in the sphere}
\label{sec_holcurve_in_sphere_uniqueness}
First we show that any finite energy plane is contained in the region 
$$
U:=\{ (x,r,\varphi)\in ST^*S^{n-1}\times D^2 ~|~ r\leq r_0\} ,
$$
where $r_0>0$ is the point with $h_2 '(r_0)=0$. We shall use polar coordinates $(r,\varphi)$ for the disk $D^2$. 
In dimension $3$ the argument can be considerably simplified, because then the contact form is a closed form when restricted to the set 
$$
\partial U = \{ (x,r,\varphi)\in ST^*S^{1}\times D^2 ~|~ r=r_0\},
$$
which allows the use of more homotopical arguments.

In general, the idea is simply that we can obtain an energy estimate by using some topological data, in this case the winding number around the binding, which is always $1$.

Now let $u$ be a finite energy plane. Now denote the preimage under $u$ by
$$
V:=u^{-1}(U).
$$
The $r$-component of $u$, which we shall denote by $u_r$, is a smooth function $V\subset \C \to \R$. After possibly reparametrizing $u$, we can ensure that $0$ maps to the unique intersection point of $u$ with the binding $ST^*S^{n-1}\times \{ 0 \}$. 

This intersection point is unique, because the symplectization of the binding is an almost complex manifold. Hence we have that the algebraic intersection number of $u$ with $\R \times ST^*S^{n-1}\times \{ 0 \}$ is larger or equal to the geometric one by positivity of intersection. Since we also know that the linking number of the Reeb orbit $\gamma_0$ with $ST^*S^{n-1}\times \{ 0 \}$ is equal to $1$, the algebraic and geometric intersection numbers are both equal to $1$.

For a \emph{regular} value $r'<r_0 $ of $u_r$, the preimage $u_r^{-1}(r')$ is a collection of circles. 
Take $\gamma_{r'}$ to be such a circle at which $u_r=r'$ that, in addition, bounds a disk containing $0$. 
In the subsequent arguments we shall always parametrize these circles with constant $u_r$ value by $\tilde \psi\in [0,2\pi)$ such that the circle has winding number $+1$ around $0$.  
Such a parametrization is not unique, but we fix one by using
\begin{eqnarray*}
S^1 & \longrightarrow & ST^*S^{n-1}\times D^2 
\end{eqnarray*}
\begin{eqnarray}
\label{eq:circle_param}
\tilde \psi & \longmapsto & (f_r(\tilde \psi);r,\tilde \psi) 
\end{eqnarray}
Note that this parametrization is consistent with the linking number of any such curve with the binding (which is the above winding number). 
We get the tangent vector $\partial_{\tilde \psi}$ to the circle. Note that $\tilde \psi$ is not necessarily related to the coordinate $\psi$ we used in previous sections. It does play a similar role though. Also consider the vector $\nu:= -i \partial_{\tilde \psi}$, which plays the role of outward pointing normal similar to $\partial_\rho$. 

We write a tangent vector in the image of the circle $\gamma_r$ as
\begin{equation}
\label{eq:r_circle_decomposition}
\partial_{\tilde \psi} u=c_r(\tilde \psi)J \frac{\partial}{\partial r}+d_r(\tilde \psi)R_\alpha+e_r(\tilde \psi)\frac{\partial}{\partial t}+X_\lambda,
\end{equation}
with $X_\lambda \in \ker \lambda$, where $\lambda$ is the contact form on the binding. 
See Formula~\eqref{eq:contactform_binding}. 
Furthermore, we have used Formulas~\eqref{eq:Reeb_binding} and \eqref{eq:geofield_binding}.
We shall use this decomposition later on as well. Also notice that $\nu(u_r)=c_r(\tilde \psi)$, since
$$
\nu u+J \frac{\partial u}{\partial \tilde \psi}=0.
$$
Note that we can find such a decomposition as in Equation~\eqref{eq:circle_param} on an annulus in $U$ with regular values of $u_r$. 
We shall do this in Section~\ref{sec:annulus_energy}.

In terms of the Parametrization~\eqref{eq:circle_param} we get the following useful relations for a circle in the preimage of a regular value of $u_r$.
The winding number along such a curve $\gamma_r$ is given by
\begin{equation}
\label{eq:windingnumber_relation}
\frac{1}{2\pi}\int_{\gamma_r} d\varphi= 
\frac{1}{2\pi}\int_{\tilde \psi=0} ^{2\pi}\frac{1}{\det H} (h_1(r) c_r(\tilde \psi)-h_1'(r)d_r(\tilde \psi))d\tilde \psi=1.
\end{equation}
This can be shown by using Equations~\eqref{eq:Reeb_binding} and \eqref{eq:geofield_binding}.
The formula gives a relation between $c_r$ and $d_r$ that we shall exploit in the next section. 
The action of a circle $\gamma_r$ is given by
\begin{equation}
\label{eq:action_special_parametrization}
\mathcal A(\gamma_r)=\int_0^{2\pi} d_r(\tilde \psi) d\tilde \psi.
\end{equation}

\subsubsection{Energy of annuli}
\label{sec:annulus_energy}
Let us now estimate the energy of a subset $C$ of a holomorphic curve in $U$ by first omitting the non-negative term $h_1 d\lambda$,
$$
\int_{C}d\alpha\geq \int_{C} h_1' dr\w \lambda+\int_{C} h_2'dr\w d\varphi =E_1(C)+E_2(C).
$$
Note that if $C$ is an annulus with inner radius $r_1$ and outer radius $r_2$ the integral
$$
E_2(A_{r_1,r_2}) =\int_{A_{r_1,r_2}} h_2'dr\w d\varphi
$$
is always positive. For the first integral $E_1$ this is not true. 
Now parametrize all regular values of $u_r$. This is a union of annuli in $U$ which we shall denote by $U_A$. 
We parametrize those annuli in the set $u(V)$ which is a set of full measure,
\begin{eqnarray*}
 U_A & \longrightarrow & ST^*S^{n-1}\times D^2\\
(r,\tilde \psi) & \mapsto & (f_r(\tilde \psi),r,\tilde \psi).
\end{eqnarray*}

Now consider a single annulus of regular values in $U_A$ with inner radius $r_1$ and outer radius $r_2$.
We can estimate the first integral for this annulus as
\begin{equation}
\label{eq:energy_annulus}
E_1(A_{r_1,r_2})=\int_{r=r_1}^{r_2} \int_{\tilde \psi=0} ^{2\pi} h_1'(r)d\lambda(\partial_{\tilde \psi}u) d\psi dr=
\end{equation}
$$
\int_{r=r_1}^{r_2} \int_{\tilde \psi=0} ^{2\pi}h_1'(r) \frac{1}{\det H}\left( h_2'd_r(\tilde \psi)-h_2 c_r(\tilde \psi) \right) d\tilde \psi dr.
$$
To see the last step, simply pick out the $R_\lambda$ part of $\partial_{\tilde \psi}u$ by using Equations~\eqref{eq:Reeb_binding}, \eqref{eq:geofield_binding} and \eqref{eq:r_circle_decomposition}. Two shorthand notations are convenient,
$$
\bar c_r=\int_{\tilde \psi=0} ^{2\pi}c_r(\tilde \psi) d\tilde \psi,~~\bar d_r=\int_{\tilde \psi=0} ^{2\pi}d_r(\tilde \psi) d\tilde \psi.
$$
By performing the $\tilde \psi$-integral in Equation~\eqref{eq:energy_annulus} and using the relation for the winding number from Equation~\eqref{eq:windingnumber_relation}
$$
\frac{1}{\det H} \left( h_1(r) \bar c_r-h_1'(r) \bar d_r \right) =2\pi,
$$
we can simplify Equation~\eqref{eq:energy_annulus} to get the following relation for a part of the energy of an annulus piece of the holomorphic curve,
$$
E_1(A_{r_1,r_2})= \int_{r=r_1}^{r_2} \left( \bar c_r -2\pi h_2'(r) \right) dr=\int_{r=r_1}^{r_2}\frac{-h_1'}{h_1}(2\pi h_2-\bar d_r) dr.
$$
Note that this gives a non-negative contribution if $\bar d_r\leq 2\pi h_2(r)$. This allows us to complete the argument. 

Take an increasing sequence $r_i$ of regular values of $u_r$ converging to $r_0$ and consider the disk $A_{0,r_i}$, which is a subset of the holomorphic curve $u$ with $r$-coordinate less than $r_i$.
There are two cases.
\begin{itemize}
\item{} There is a sequence $\{ r_i \}$ that satisfies $d_{r_i}\geq 2\pi h_2(r_i)$.
The above estimate cannot be used, but we see directly
$$
E(A_{0,r_i})=d_{r_i}\geq 2\pi h_2(r_i)
$$
by using Stokes' theorem and Equation~\eqref{eq:action_special_parametrization}. For $i\to \infty$, we get
$$
E(A_{0,r_0})\geq 2\pi h_2(r_0).
$$
\item{} If the above case does not hold true, then we can assume that the sequence $\{ r_i \}$ satisfies $d_{r_i}\leq 2\pi h_2(r_i)$.
Indeed, if the sequence has infinitely many $r_i$ where this is not true, then we are again in the above case.
Hence we can assume that for $u_r>\tilde r$ all regular values $r'$ of $u_r$ satisfy $d_{r'}\leq 2\pi h_2(r')$.
By the above estimate and by using Sard's theorem, i.e.~regular values have full measure, we can compute the energy of the annulus $A_{r_1,r_0}$.
Indeed, we can sum the contributions from the above estimates to obtain
$$
E(A_{r_1,r_0})\geq E_2(A_{r_1,r_0})=2\pi \left( h_2(r_0)-h_2(r_1) \right) .
$$
On the other hand, we can assume that
$$
E(A_{0,r_1})\geq 2\pi h_2(r_1)
$$
by decreasing $r_1$ until either $r_1=0$ or until $d_{r_1}\geq 2\pi h_2(r_1)$. 
We may pass through non-regular values, because this has no influence on the energy.
\end{itemize}
Therefore we always have
$$
\int_{u(V)}d\alpha\geq 2\pi h_2(r_0)=\mathcal A(\gamma_0).
$$
If the holomorphic curve would have points with $r$-coordinate larger than $r_0$, then there would also be open sets giving a positive contribution to the $d\alpha$-energy, increasing the energy beyond $\mathcal A(\gamma_0)$, which is impossible. Hence we see that the holomorphic curve must satisfy $u_r\leq r_0$.

\noindent{\bf Reduction to dimension 3}\\
The above energy computations show that a finite energy plane only moves in the directions $\partial_t$, $\partial_r$, $\partial_\varphi$ and the direction of the geodesic flow, $\vec p \partial_{\vec q}-\vec q \partial_{\vec p}$, since otherwise
$$
\int h_1 d\lambda>0,
$$
which would increase the energy above $\mathcal A(\gamma_0)$. In dimensions $3$ we can use coordinates $(t,\theta,r,\varphi)$ around the symplectization of the binding
$$
\R\times S^1\times D^2.
$$
The coordinate $\theta$ corresponds to the geodesic flow. This means that a finite energy plane bounding $\gamma_0$ gives also rise to a finite energy plane in the symplectization of a $3$-dimensional sphere. We can simply map the $t$, $r$ and $\varphi$ coordinates of a finite energy plane in a higher dimension contact manifold to the corresponding coordinates in the $3$-dimensional manifold. The amount of geodesic flow can be mapped to the $\theta$ coordinate.

Hence the existence of a finite energy plane different from $u_0$ in dimension $3$ is equivalent to the existence of a finite energy plane different from $u_0$ in higher dimensions.

In dimension $3$ we can use an intersection theoretic argument to show that there are no other planes bounding $\gamma_0$. Let $u$ be any finite energy plane bounding $\gamma_0$. It has the form
\begin{eqnarray}
\label{eq:u_general_plane}
u:~(\rho,\psi) & \longmapsto & (f_t(\rho,\psi),f_\theta(\rho,\psi),f_r(\rho,\psi),f_\varphi(\rho,\psi)).
\end{eqnarray}
Note that $u_0$ has the form
\begin{eqnarray*}
u_0:~(\rho,\psi) & \longmapsto & (H_1(\rho),\theta_0,H_2(\rho),\psi)
\end{eqnarray*}
for a constant $\theta_0$ and $H_1$ and $H_2$ solving the Equations~\eqref{eq:CR_r_coor} and~\eqref{eq:CR_t_coor} . We shall now show that we can assume that $f_\theta$ is not constant.

\noindent{\bf Constant $f_\theta$}\\
We show that if the $f_\theta$-component of $u$ is equal to $\theta_0$, then $u$ is equivalent to $u_0$. Take a regular value $r_0$ of the $r$-component of $u$. We take a circle in the preimage of $r_0$, which we shall parametrize by $\tilde \psi$. We can again use the decomposition from~\eqref{eq:r_circle_decomposition}. Note that, because we are now in dimension~$3$, it follows that $X_\lambda=0$. Now we use that the $\theta$-coordinate is constant along $u$; to see the $\theta$-component of a vector we use the Equations~\eqref{eq:Reeb_binding} and~\eqref{eq:geofield_binding}. Then, looking at Equation~\eqref{eq:r_circle_decomposition} we see that $e_r(\tilde \psi)=0$, for otherwise $J\partial_{\tilde \psi} u$ would have a non-zero $\theta$-component. This means that $\partial_{\tilde \psi}u$ has only components in the $\varphi$ direction.  

As a result, the plane $u$ goes through circles at $r=r_0$ with constant $t$ and $\theta$ coordinate. Hence we can translate $u$ in the $t$-direction such that $u$ intersects $u_0$ along at least a circle at $r=r_0$. However, then we get a contradiction to positivity of intersection if we assume $u$ and $u_0$ to be not equivalent. Indeed, both $u$ and $u_0$ are simple, so according to Proposition~E.2.2 in \cite{McDuff:J-hol_symplectic_topology} the intersection points of $u$ and $u_0$ are isolated. Since they are not by the above, it follows that $u$ is equivalent to $u_0$.

\noindent{\bf Non-constant $f_\theta$}\\
Now we consider the case that $f_\theta$ is not constant, so in particular $u$ is not equivalent to $u_0$. We shall show that this leads to a contradiction.
By assumption, both $u$ and $u_0$ are asymptotic to $\gamma_0$. On the other hand, if $u$ is a solution to the Cauchy-Riemann equations of the form~\eqref{eq:u_general_plane}, then for $c\neq 0$
\begin{eqnarray*}
u_c:~(\rho,\psi) & \longmapsto & (f_t(\rho,\psi),f_\theta(\rho,\psi)+c,f_r(\rho,\psi),f_\varphi(\rho,\psi))
\end{eqnarray*}
is also a solution to the Cauchy-Riemann equations, but one that is asymptotic to another orbit, say $\gamma_c$. This Reeb orbit $\gamma_c$ is not linked with $\gamma_0$, because it does not intersect the Seifert surface determined by $u_0$. So $\lk(\gamma_c,\gamma_0)=0$.

On the other hand, the linking number can also be computed as a $4$-dimensional intersection number of the Seifert surfaces of $\gamma_c$ and $\gamma_0$. Indeed, $\lk(\gamma_c,\gamma_0)=u_c \cdot u_0$. But if $f_\theta$ is non-constant, we can find a small $c$ such that $u_c$ and $u_0$ intersect. By positivity of intersection, it follows that $\lk(\gamma_c,\gamma_0)>0$, which gives a contradiction. Hence we conclude that $u_0$ is the only finite energy plane bounding $\gamma_0$.

This completes the argument that there is a unique finite energy plane bounding $\gamma_0$. As as result, the contact homology algebra $HC_*(S^{2n-1},\xi_L=\ker\alpha)$ vanishes.

\begin{remark}
Alternatively, we can use a counting argument to show that there are no other finite energy planes in the region $r\leq r_0$ contributing the differential. Indeed, the almost complex structure is $\varphi$-independent, i.e.~the complex structure $J$ has an $S^1$-symmetry.
 Therefore, if $u$ is any finite energy plane bounding $\gamma_0$, then we can rotate $u$ in the $\varphi$ direction.
 This fixes the boundary condition, i.e.~the Reeb orbit $\gamma_0$ is invariant under the $\varphi$-rotation. If $u$ is different from $u_0$, then this symmetry gives us an $S^1$-family of holomorphic curves. Such an $S^1$-family is not counted by the differential, see~\cite{Bourgeois:thesis} for more details. Note however that the previous argument shows that such an $S^1$-family cannot even exist.
\end{remark}

\subsection{Connected sums with an exotic sphere}
Let $(M_1,\alpha_1)$ and $(M_2,\alpha_2)$ be cooriented contact manifolds of dimension $2n-1$. It is well known that the connected sum $M_1 \# M_2$ is also a contact manifold \cite{Weinstein:surgery}. One can choose Darboux balls in $M_1$ and $M_2$ and connect them via a connecting tube which has a very explicit model.

This allows us to retain some control on the Reeb dynamics of $M_1\# M_2$. Orbits that do not pass the connected sum region are not affected. Furthermore, there are new orbits that lie entirely in the connecting tube, which we shall call tube orbits, and wandering orbits, i.e.~orbits that start out in $M_1$, go to $M_2$ via the tube and back again. These wandering orbits can have any degree, but their action can be made arbitrarily large. 

The tube orbits have been studied by Ustilovsky in his thesis \cite{Ustilovsky:thesis}.
 The upshot is that we can choose a contact form for the connecting tube such that all tube orbits lie in a contact sphere in the middle of the tube. This gives generators in odd degree $k$ for $k\geq 2n-3$. These generators can be made to have arbitrarily small action. On the other hand, wandering orbits can have any degree, but their action can be made arbitrarily large.

Now let $(M,\xi)$ be a cooriented contact manifold and form the new contact manifold $(M,\xi) \# (S^{2n-1},\xi_L)$. By taking the connected sum region in $S^{2n-1}$ near the zero-section of a page in the above open book, we see that
$$
HC_*( (M,\xi)\#(S^{2n-1},\xi_L))=0.
$$
Indeed, the energy argument from the previous section still applies, because any holomorphic plane still needs to intersect the binding in the model we constructed for $(S^{2n-1},\alpha_L)$.
 This means that finite energy planes cannot even come close to the connecting tube of the connected sum.
 On the other hand, we can also ensure that the closed Reeb orbit $\gamma_0$ has action smaller than all other orbits except for orbits that lie in the middle of the connecting tube.
 To see this, think of $(S^{2n-1},\xi_L)$ as a very small manifold;
 by adjusting the construction from Section~\ref{sec:prelims} we can make the action of the Reeb orbit $\gamma_0$ arbitrarily small. 
The tube orbits have a positive index that is larger than $1=\deg \gamma_0$ (one can have equality in dimension $3$), so we can exclude holomorphic curves going from $\gamma_0$ to other orbits by either action or degree reasons.
 We have therefore the following theorem.
\begin{theorem}
\label{thm:vanishingCH}
Let $(M,\alpha)$ be any contact manifold and let $(S^{2n-1},\alpha_L)$ be the contact sphere with vanishing contact homology constructed in Section~\ref{sec_openbook_construction}. Suppose $(M\# S^{2n-1},\alpha \# \alpha_L)$ has a well defined contact homology. Then $(M\# S^{2n-1},\alpha \# \alpha_L)$ is algebraically overtwisted.
\end{theorem}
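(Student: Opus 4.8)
The plan is to reduce the statement to the vanishing of $\partial\gamma_0$ that was already established for $(S^{2n-1},\alpha_L)$ alone, and then to check that forming the connected sum with an arbitrary $(M,\alpha)$ introduces no new holomorphic curves that interfere with the count $\partial\gamma_0=\pm 1$. Thus the theorem should follow from the same three mechanisms used in Sections~\ref{sec:hol_curves}--\ref{sec:uniqueness}: the explicit finite energy plane $u_0$, the confinement estimate, and the positivity-of-intersection uniqueness argument, all of which are local near the binding.

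First I would fix the geometry of the connected sum. Using Weinstein's model I place the connecting tube inside the Darboux ball around the zero-section of a page of the open book of $(S^{2n-1},\alpha_L)$, keeping it disjoint from the thickened binding neighborhood $B=ST^*S^{n-1}\times D^2$. By rescaling the functions $h_1,h_2$ (equivalently, shrinking $(S^{2n-1},\alpha_L)$) I arrange that the action $\mathcal A(\gamma_0)=2\pi h_2(r_0)$ is smaller than the action of every closed Reeb orbit of $M\# S^{2n-1}$ except possibly the tube orbits concentrated in the contact sphere in the middle of the tube; the latter are Ustilovsky-type orbits of odd index $k\geq 2n-3>1=\deg\gamma_0$, with equality only in dimension $3$. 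Wandering orbits can carry arbitrary degree but can be given arbitrarily large action, so they too are excluded on action grounds.

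Next I would re-run the energy estimate of Section~\ref{sec_holcurve_in_sphere_uniqueness} in the new manifold. The key point is unchanged: any finite energy plane asymptotic to $\gamma_0$ must intersect the binding $ST^*S^{n-1}\times\{0\}$ with linking number one, and the estimate $\int_{u(V)}d\alpha\geq 2\pi h_2(r_0)=\mathcal A(\gamma_0)$ forces its image to lie entirely in the region $\{r\leq r_0\}$, which is contained in $B$ and hence disjoint from the connecting tube. Therefore the uniqueness argument for planes bounding $\gamma_0$ — first the reduction to dimension $3$, then positivity of intersection with the translates $u_c$ — applies verbatim, so $u_0$ remains the unique finite energy plane asymptotic to $\gamma_0$. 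Combined with the action and degree bookkeeping of the previous paragraph, which rules out any curve from $\gamma_0$ to a multi-orbit configuration, we obtain $\partial\gamma_0=\pm 1$ in the differential graded algebra of $(M\# S^{2n-1},\alpha\#\alpha_L)$, which is well defined by hypothesis.

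Finally, $\partial\gamma_0=\pm 1$ implies as in Section~\ref{sec:uniqueness} that every cycle $\gamma$ is a boundary, since $\partial(\gamma_0\gamma)=\pm\gamma$; hence $HC_*(M\# S^{2n-1},\alpha\#\alpha_L)=0$ and the manifold is algebraically overtwisted. I expect the main obstacle to be the bookkeeping that excludes holomorphic curves escaping into the connecting tube or breaking onto chains of wandering orbits: one must make $\mathcal A(\gamma_0)$ genuinely the smallest action among all relevant orbits while simultaneously keeping the indices of the tube orbits strictly above $\deg\gamma_0$, and then combine this with the energy confinement to the binding region. A secondary, already acknowledged, technical point is that the whole argument is conditional on a perturbation scheme making the contact homology of the connected sum well defined, which is exactly why it appears as a hypothesis.
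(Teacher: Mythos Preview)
Your proposal is correct and follows essentially the same approach as the paper: place the connected sum region near the zero section away from the binding, use the energy confinement to trap any plane asymptotic to $\gamma_0$ in the region $\{r\leq r_0\}\subset B$ so that the uniqueness argument from Section~\ref{sec:uniqueness} applies verbatim, and rule out other curves from $\gamma_0$ by the action/index bookkeeping on tube and wandering orbits. The paper's proof is somewhat terser but invokes exactly the same ingredients in the same order.
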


As an application we give an alternative proof of the well known theorem that contact homology of overtwisted contact manifolds vanishes. Namely in dimension $3$, the contact sphere $S_{+1}:=(S^3,\alpha_L)$ corresponds to the overtwisted contact structure on $S^3$ with homotopy class $+1$, see~\cite{Stipsicz:openbook_3manifold}. Let $S_{-1}$ denote the overtwisted contact structure on $S^3$ with homotopy class $-1$. 

Now let $M$ be any overtwisted contact manifold and consider $M\# S_{-1} \# S_{+1}$, which has the same homotopy class of plane fields. By Eliashberg's classification of overtwisted contact structures \cite{Eliashberg:overtwisted} the contact manifold $M\# S_{-1} \# S_{+1}$ is contactomorphic to $M$. By the above theorem, the former manifold has vanishing contact homology.

\section{Connected sums and stabilizations}
\label{sec:stabilization}
Let us begin by defining the plumbing or $2$-Murasugi sum of contact open books.
 Note that it is currently not known whether this operation can be performed in a such a way that the resulting open book is again a contact open book.
\begin{construction}
Let $(M_1,\alpha_1)$ and $(M_2,\alpha_2)$ be contact manifolds.
 Suppose the open book formed by the page $\Sigma_i$ and monodromy $\psi_i$ is a compatible open book for the contact manifold $(M_i,\alpha_i)$.
 Suppose that $L_i$ is a properly embedded Lagrangian ball with Legendrian boundary in $\Sigma_i$ for $i=1,2$.
 Then the {\bf plumbing} $P(\Sigma_1,\Sigma_2;L_1,L_2)$ of the pages is defined by taking the plumbing of $\Sigma_1$ and $\Sigma_2$ along $L_1$ and $L_2$. 
More precisely, by the Weinstein neighborhood theorem we get standard neighborhoods of $L_i$ which are symplectomorphic to $(T^*D^n,\lambda_{can})$. If we use coordinates $(q,p)$ for an element in $T^*D^n$, then $\lambda_{can}=pdq$. 
These coordinates can be used for the plumbing. 
We identify the $q$-coordinates of $L_1$ with the $p$-coordinates of $L_2$ and vice versa.

Now extend $\psi_j$ to $\tilde \psi_j$ by requiring these maps to be the identity outside the domain of $\psi_j$.  
We obtain an open book by taking the monodromy $\tilde \psi_2 \circ \tilde \psi_1$.
\end{construction}
Morally speaking, if $\Sigma_1$ and $\Sigma_2$ are Stein, then we expect that $P(\Sigma_1,\Sigma_2;L_1,L_2)$ is Stein. A simple idea that works for some examples is to interpolate plurisubharmonic functions on both parts of the plumbing. If we suppose that the page is symplectic and $\psi_2\circ \psi_1$ is a symplectomorphism, then it is not clear that $\psi_2\circ \psi_1$ is an exact symplectomorphism.
 However, a simple trick of Giroux shows that we can then always construct a contact open book by deforming the symplectomorphism.
\begin{remark}
The composition of the monodromies does not necessarily send a point $x\in \Sigma_1$ to a point in $\Sigma_1$.
 Indeed, suppose that $\psi_1$ sends $x$ to the plumbing ball.
 Then $\psi_2$ might send that point outside the plumbing ball and hence outside $\Sigma_1$.
\end{remark}

In dimension $3$ the situation is simpler though.
 The resulting open book with page $P(\Sigma_1,\Sigma_2;L_1,L_2)$ and monodromy $\psi_2 \circ \psi_1$ is an open book supporting a contact structure. In fact, we have the following theorem due to Torisu \cite{Torisu:Murasugi_sum}.
\begin{theorem}[Plumbing or $2$-Murasugi sum]
\label{thm:plumbing_openbook}
The $2$-Murasugi sum of the contact open books $(\Sigma_1,\psi_1)$ and $(\Sigma_2,\psi_2)$ along the arcs $L_1$ and $L_2$ is contactomorphic to the contact connected sum $(\Sigma_1,\psi_1) \# (\Sigma_2,\psi_2) $.
\end{theorem}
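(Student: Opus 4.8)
The plan is to reduce the statement to two facts about open books in dimension three: that the underlying three--manifold of the plumbed open book is a connected sum, and that the contact structure compatible with a fixed open book is unique up to isotopy.

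First I would identify the three--manifold carrying $\bigl(P(\Sigma_1,\Sigma_2;L_1,L_2),\tilde\psi_2\circ\tilde\psi_1\bigr)$ with $M_1\#M_2$. This is the familiar principle --- due, in the smooth category, to Gabai --- that a Murasugi sum is additive on the ambient manifolds: there is a separating two--sphere $S$ inside the plumbed open book, built from the plumbing square $Q$ together with pieces of the binding neighbourhoods, whose two complementary pieces are $M_1$ and $M_2$ with an open ball removed. I would recall this construction and, crucially, check that the two pieces inherit exactly the open books $(\Sigma_1,\psi_1)$ and $(\Sigma_2,\psi_2)$, so that the decomposition is one of open books and not merely of three--manifolds.

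For the contact structure I would invoke the Thurston--Winkelnkemper--Giroux theorem: the contact structure compatible with a fixed open book in dimension three exists and is unique up to isotopy. It therefore suffices to construct a single contact form $\alpha$ adapted to $\bigl(P(\Sigma_1,\Sigma_2;L_1,L_2),\tilde\psi_2\circ\tilde\psi_1\bigr)$ and to recognise $\ker\alpha$ as $\xi_1\#\xi_2$. I would build $\alpha$ by the Thurston--Winkelnkemper recipe so that it agrees with forms adapted to $(\Sigma_i,\psi_i)$ over the subsurfaces $\Sigma_i\setminus Q$ of the page, is standard near the binding, and over the plumbing region equals the explicit local model supplied by the hypotheses: a Weinstein neighbourhood $(T^*D^1,\lambda_{can})$ of each $L_i$ with Legendrian boundary, glued by interchanging the $q$-- and $p$--coordinates. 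In this model the interpolation absorbing $\tilde\psi_2\circ\tilde\psi_1$ can be written down by hand, and a direct computation shows that in a neighbourhood of the sphere $S$ the form $\alpha$ is the standard Darboux form on a collar $S^2\times(-\varepsilon,\varepsilon)$ separating two half--balls. Cutting $(M,\ker\alpha)$ along $S$ and capping off then exhibits $\ker\alpha$ as the contact connected sum of the contact structures compatible with $(\Sigma_1,\psi_1)$ and $(\Sigma_2,\psi_2)$, namely of $\xi_1$ and $\xi_2$.

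The main obstacle is the contact step, not the topological one. One must first make sense of $\tilde\psi_2\circ\tilde\psi_1$ as an isotopy class of compactly supported symplectomorphisms of the plumbed page to which the Thurston--Winkelnkemper construction applies, keeping in mind the warning in the Remark above that this composition need not map $\Sigma_1$ into itself; this is where Giroux's deformation trick is used to replace the symplectomorphism by an honest compatible monodromy. One must then check that, uniformly in all the auxiliary choices, the contact form so obtained really reproduces the standard connected--sum model in a neighbourhood of $S$. Both points are tractable in dimension three precisely because the $L_i$ are arcs and the only monodromy carried near them is by Dehn twists, so the local symplectic geometry is essentially one--dimensional and its rigidity is mild --- this is exactly where the higher--dimensional analogue resists, which is why only the conjectures of the final section can be offered there. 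As a consistency check one could verify that a positive stabilization of $(\Sigma_i,\psi_i)$ performed away from $L_i$ induces a positive stabilization of the Murasugi sum, matching the Giroux correspondence.
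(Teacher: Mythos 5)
The paper offers no proof of this statement at all: it is quoted verbatim from Torisu \cite{Torisu:Murasugi_sum}, so there is no in-text argument to compare yours against. Judged on its own, your outline has the right skeleton (Gabai-type topological additivity of the $2$-Murasugi sum, plus uniqueness of the supported contact structure in dimension $3$), but the contact-geometric core is asserted rather than proved, and it is exactly the hard part. The separating sphere $S$ is not contained in a page: it is assembled from two copies of the plumbing square lying in different pages together with rectangles running through the binding neighbourhoods, so it crosses both the region where the monodromy interpolation takes place and the binding region. The claim that "a direct computation shows" the Thurston--Winkelnkemper form is "the standard Darboux form on a collar $S^2\times(-\epsilon,\epsilon)$" is not a meaningful normal form for a contact structure near a sphere and is not established by your construction; what is actually needed is that $S$ be a convex sphere with connected dividing set (equivalently, by Eliashberg, that each complementary ball is standard tight), and that after cutting and capping each piece the resulting contact structure is the one supported by $(\Sigma_i,\psi_i)$. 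This last identification does not come for free from "the pieces inherit the open books": the pages of the open book of $M_i$ near $S$ are reassembled from pieces of pages of the plumbed manifold plus pieces of the cap, so compatibility on the capped piece needs its own verification (or another appeal to uniqueness, with the compatibility hypothesis checked). Your aside that the monodromy near the $L_i$ consists of Dehn twists is also off in dimension $3$ (the $L_i$ are arcs), and the issue flagged in the paper's remark about $\tilde\psi_2\circ\tilde\psi_1$ not preserving $\Sigma_1$ is acknowledged but not actually resolved.

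For comparison, Torisu's argument avoids normalizing a contact form near $S$ altogether: he characterizes the supported contact structure via the Heegaard splitting $M=\left(\Sigma\times[0,1/2]\right)\cup\left(\Sigma\times[1/2,1]\right)$, namely that the splitting surface is convex with dividing set the binding and the restriction to each handlebody is tight, and then checks directly that $\xi_1\#\xi_2$ satisfies this characterization for the plumbed open book. If you want to complete your route instead, the missing steps are precisely: (i) a proof that $S$ can be isotoped to a convex sphere with a single dividing curve for the supported structure, and (ii) a proof that each capped-off side is supported by $(\Sigma_i,\psi_i)$; both are doable with convex surface theory, but neither is a formality.
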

Note that in the higher dimensional case, we also have that the plumbing operation of open books is diffeomorphic to the connected sum of the manifolds.

If the Lagrangian submanifolds $L_1$ and $L_2$ which we use for the plumbing are boundary-parallel, then the Murasugi sum amounts to a book connected sum. The latter gets an induced open book compatible with the contact connected sum of $(M_1,\alpha_1)$ and $(M_2,\alpha_2)$. Therefore, the following conjectures seem reasonable and in fact, current work of Giroux on open books (see also~\cite{Giroux:ICM}) should establish these conjectures.
\begin{conjecture}
Suppose we are given contact open books $(\Sigma_1,\psi_1)$ and $(\Sigma_2,\psi_2)$ with properly embedded Lagrangian balls $L_1\subset \Sigma_1$ and $L_2\subset \Sigma_2$.
 Assume that these Lagrangian balls have Legendrian boundary.
 Then there is are deformations $\tilde \psi_i$ of $\psi_i$ that are isotopic as symplectomorphisms to $\psi_i$ with the following property.
 The open book $(P(\Sigma_1,\Sigma_2;L_1,L_2),\tilde \psi_2 \circ \tilde \psi_1)$ supports the contact structure on the connected sum $(M_1,\alpha_1)\#(M_2,\alpha_2)$.
\end{conjecture}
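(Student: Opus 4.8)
The plan to establish this conjecture is to combine the deformation trick of Giroux, which takes care of the \emph{existence} of a supporting contact structure on $(P(\Sigma_1,\Sigma_2;L_1,L_2),\tilde\psi_2\circ\tilde\psi_1)$, with a local model computation that identifies the resulting contact manifold with the contact connected sum. First I would normalize the symplectic data. Since each $\psi_i$ is the identity near $\partial\Sigma_i$ in a compatible open book, I may isotope it to be the identity on a neighbourhood of $\partial L_i$, and by the Weinstein neighbourhood theorem I may take the standard neighbourhood of $L_i$ to be $(T^*D^n,\lambda_{can})$ with $\lambda_{can}=p\,dq$, exactly as in the Construction. In the local model in which $T^*D^n$ is plumbed to $T^*D^n$ by the coordinate swap $(q,p)\leftrightarrow(p,q)$, this identification is exact symplectic, so the canonical Liouville forms glue, after a cut-off interpolation, to a Liouville form $\lambda$ on $P$; when the $\Sigma_i$ are Stein, one interpolates the plurisubharmonic functions to obtain a Weinstein, and then Stein, structure. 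The map $\tilde\psi_2\circ\tilde\psi_1$ need not preserve $\lambda$, but by Giroux's trick it deforms, within its isotopy class of symplectomorphisms of $(P,d\lambda)$, to an exact symplectomorphism $\tilde\psi$ with $\tilde\psi^*\lambda-\lambda=dh$ and $h$ positive after rescaling; this $\tilde\psi$ (arranged to be a composition of deformations of the $\psi_i$) is the deformation required in the statement, and Giroux's construction applied to $(P,\tilde\psi)$ yields a contact manifold $(N,\xi_N)$.

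The remaining, and essential, step is to identify $(N,\xi_N)$ with $(M_1,\alpha_1)\#(M_2,\alpha_2)$. The point is that both the plumbing and the Giroux deformation are supported in a neighbourhood $\mathcal N$ of the plumbing region, so away from $\mathcal N$ the mapping torus of $\tilde\psi$ over $\Sigma_i$, glued to a collar $\partial\Sigma_i\times D^2$ of the binding, reconstructs $(M_i,\alpha_i)$ with an open ball removed. Over $\mathcal N$ the contact germ is that of the open book whose page is the model plumbing $T^*D^n\#_{\mathrm{plumb}}T^*D^n$, whose monodromy is the composition of the two local pieces, and with a collar of the new binding; I would show that this germ is contactomorphic to the connecting tube that realizes the contact connected sum (the same explicit model used in Section~\ref{sec:stabilization}), so that the gluing produces $(N,\xi_N)\cong(M_1,\alpha_1)\#(M_2,\alpha_2)$. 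When $L_1,L_2$ are boundary-parallel this germ is the book connected sum and the identification is already known; in dimension $3$ the whole step is Torisu's Theorem~\ref{thm:plumbing_openbook}. A possible alternative in the general case is to show that $(P,\tilde\psi)$ becomes isotopic, after finitely many positive stabilizations, to the book connected sum open book, and then to invoke that positive stabilization does not change the supported contact structure.

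The step I expect to be the main obstacle is this identification in the general, non boundary-parallel case. The structural difficulty is that $\tilde\psi_2\circ\tilde\psi_1$ does not respect the decomposition $P=\Sigma_1\cup\Sigma_2$: a point of $\Sigma_1$ may be carried by $\tilde\psi_1$ into the plumbing region and then moved out of $\Sigma_1$ by $\tilde\psi_2$, so the reconstruction of $(M_i,\alpha_i)$ from ``its'' portion of the mapping torus is only valid up to an isotopy supported near the binding, and this isotopy must be produced compatibly with the attaching region of the connecting tube. A secondary obstacle is the existence of a genuinely Stein (as opposed to merely Liouville) structure on an arbitrary plumbing, since interpolation of plurisubharmonic functions is not automatic. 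Controlling these two points, and verifying that the final contactomorphism restricts to the identity on the unmodified parts of $M_1$ and $M_2$, is where the real work lies; the hope, in line with the announced work of Giroux, is that the Weinstein-handle picture of the plumbing region, together with explicit binding models of the kind used in this paper, makes it tractable.
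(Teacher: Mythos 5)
This statement is left as a \emph{conjecture} in the paper: no proof is given there beyond the observations that in dimension $3$ it follows from Torisu's theorem (Theorem~\ref{thm:plumbing_openbook}), that the case of boundary-parallel Lagrangian balls reduces to a book connected sum, and that the general case is expected to follow from announced work of Giroux. Your proposal reproduces essentially the same heuristic picture the paper itself sketches --- Weinstein neighbourhoods, the coordinate-swap plumbing of the pages, Giroux's trick for deforming to an exact symplectomorphism, Torisu in dimension $3$ --- so it cannot be checked against a proof in the paper; it must stand on its own, and as written it does not close the conjecture.

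The genuine gap is exactly the step you flag yourself: the identification of the contact germ of the open book $(P(\Sigma_1,\Sigma_2;L_1,L_2),\tilde\psi_2\circ\tilde\psi_1)$ near the plumbing region with the connecting tube realizing the contact connected sum, in dimension greater than $3$ and for $L_i$ not boundary-parallel. This is not a deferred verification; it is the entire content of the conjecture, and writing ``I would show that this germ is contactomorphic to the connecting tube'' assumes what must be proved. The obstruction recorded in the paper's remark --- that $\tilde\psi_2\circ\tilde\psi_1$ need not map $\Sigma_1$ into $\Sigma_1$ --- means the mapping torus does not a priori decompose into a piece reconstructing $M_1$ minus a ball, a piece reconstructing $M_2$ minus a ball, and a standard tube, so your ``away from $\mathcal N$'' reconstruction of the $(M_i,\alpha_i)$ is not available without further argument. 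Two subsidiary points also need care: the identification of the $q$-coordinates of $L_1$ with the $p$-coordinates of $L_2$ matches the canonical Liouville forms only up to an exact term and a sign convention, so one must verify that the interpolated one-form on $P(\Sigma_1,\Sigma_2;L_1,L_2)$ is still Liouville with contact-type boundary; and the statement asks for deformations $\tilde\psi_i$ of each $\psi_i$ separately, whereas Giroux's trick a priori produces a single exact symplectomorphism isotopic to the composition, so the required factorization must be arranged in addition. In short, your text is a reasonable research plan --- largely the one the paper itself proposes --- but not a proof.
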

Note that this conjecture would imply that a right-handed stabilization of a contact manifold $(M,\xi)$ is contactomorphic to $(M,\xi)$. Indeed, first observe that the standard structure $\xi_0$ on $S^{2n+1}$ admits an open book with page $T^*S^n$ and monodromy a right-handed Dehn twist. 
Then a stabilization of $(M,\xi)$ along a Lagrangian $L$ can simply be regarded as the plumbing of $(M,\xi)$ and $(S^{2n+1},\xi_0)$ along $L$ and a fiber of $T^*S^n$.
\begin{conjecture}
Let $(M,\alpha)$ be a cooriented contact manifold with dimension $\geq 3$. Let $(\Sigma,\psi)$ be a supporting open book for $(M,\alpha)$ such that $\Sigma$ contains a Lagrangian ball with boundary a Legendrian sphere inside $\partial \Sigma$. Then the left-handed stabilization of $(M,\alpha)$ along $L$ has vanishing contact homology.
\end{conjecture}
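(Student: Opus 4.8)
The plan is to derive the conjecture from the plumbing conjecture stated above, together with Theorem~\ref{thm:vanishingCH}; note that for boundary-parallel $L$ the statement is already the main theorem, so the new content is the case of a general Lagrangian ball $L$. The first step is to exhibit the left-handed stabilization as a plumbing. Following the description of stabilization in Section~\ref{sec:prelims}, one attaches a Weinstein handle to $\Sigma$ along $\partial L$ to obtain a Lagrangian sphere $S$ in the enlarged page $\tilde\Sigma$, and the left-handed stabilization is the open book $(\tilde\Sigma,\,\tau_{-1}\circ\psi)$, where $\tau_{-1}$ is the left-handed Dehn--Seidel twist along $S$ and $\psi$ is extended by the identity over the handle. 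A Weinstein neighbourhood of $S$ is symplectomorphic to a disk bundle in $T^*S^{n-1}$, and $\tau_{-1}$ is supported there; hence $(\tilde\Sigma,\,\tau_{-1}\circ\psi)$ is exactly the plumbing $P(\Sigma, T^*S^{n-1}; L, F)$ of $(\Sigma,\psi)$ with the page $T^*S^{n-1}$ of the exotic sphere $(S^{2n-1},\xi_L)$ of Section~\ref{sec_openbook_construction}, with $F$ a cotangent fibre and composed monodromy $\tau_{-1}\circ\psi$.

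Granting the plumbing conjecture, $\tau_{-1}\circ\psi$ can be deformed, within its isotopy class as a symplectomorphism, to an exact symplectomorphism so that the resulting contact open book supports the contact connected sum $(M,\alpha)\#(S^{2n-1},\xi_L)$. Since $(S^{2n-1},\xi_L)$ is the contact sphere with vanishing contact homology built in Section~\ref{sec_openbook_construction}, Theorem~\ref{thm:vanishingCH} applies and shows that $(M\#S^{2n-1},\alpha\#\alpha_L)$ is algebraically overtwisted, provided its contact homology is well defined. As the left-handed stabilization along $L$ is then contactomorphic to this connected sum, it has vanishing contact homology, which is the assertion of the conjecture.

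The main obstacle is the plumbing conjecture itself, left open in the paper. Two points must be checked: first, that the plumbing of the two exact (Stein) pages carries an exact symplectic structure and that $\tau_{-1}\circ\psi$ can be rendered exact --- the obstruction here is the relevant flux class, expected to be killed by Giroux's deformation trick; second, that the resulting open book is contactomorphic to the contact connected sum, generalizing Torisu's Theorem~\ref{thm:plumbing_openbook} \cite{Torisu:Murasugi_sum} beyond dimension $3$, which should follow from current work of Giroux on open books (see also \cite{Giroux:ICM}). A secondary caveat, already built into Theorem~\ref{thm:vanishingCH}, is the foundational transversality problem for contact homology, for which the abstract perturbation schemes of \cite{Hofer:Polyfolds,HWZ:PolyfoldsI,HWZ:PolyfoldsII,CM:GW_Transversality} are required.

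A more self-contained alternative would bypass the full plumbing conjecture and imitate Sections~\ref{sec:hol_curves}--\ref{sec:uniqueness} inside the stabilized manifold directly. Since the monodromy of $(M,\alpha)$ is the identity near $L$ and is extended by the identity over the handle, it is the identity near $S$, so a neighbourhood of $S$ carries a contact form agreeing with the model of Section~\ref{sec:prelims} around the binding $ST^*S^{n-1}$ of $(S^{2n-1},\xi_L)$. One then transplants the finite energy plane $u_0$, the surjectivity of its linearized Cauchy--Riemann operator from Section~\ref{sec:transversality}, and the uniqueness argument of Section~\ref{sec_holcurve_in_sphere_uniqueness}: the latter still goes through because any finite energy plane asymptotic to $\gamma_0$ must meet the $J$-holomorphic hypersurface $\R\times ST^*S^{n-1}\times\{0\}$ with algebraic and geometric intersection number one, confining it to the model region. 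After shrinking the Weinstein neighbourhood so that $\mathcal{A}(\gamma_0)$ lies below the action of every Reeb orbit outside the model region, action and index estimates exclude any further curve with $\gamma_0$ at the positive puncture, yielding $\partial\gamma_0=\pm1$ and hence vanishing contact homology. The hard part in this route is the confinement and action control for non-boundary-parallel $L$: one must ensure that the remaining monodromy $\psi$ creates neither low-action orbits that could interact with $\gamma_0$ nor holomorphic curves escaping the model region --- in effect, a local version of the plumbing conjecture.
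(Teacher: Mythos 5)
Your reduction is exactly the route the paper has in mind: regard the left-handed stabilization along $L$ as the plumbing of $(\Sigma,\psi)$ with the page $T^*S^{n-1}$ of $(S^{2n-1},\xi_L)$, invoke the plumbing conjecture to identify the result with the contact connected sum $(M,\alpha)\#(S^{2n-1},\alpha_L)$, and conclude by Theorem~\ref{thm:vanishingCH}; this is precisely why the statement is left as a conjecture, established in the paper only in dimension $3$ (via Theorem~\ref{thm:plumbing_openbook}) and, in all dimensions, for boundary-parallel $L$. You correctly identify that the only genuine gap is the open plumbing conjecture itself (together with the transversality caveat already built into Theorem~\ref{thm:vanishingCH}), so your conditional argument matches the paper's own.
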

Note that the conjecture can be proved in dimension $3$ by Theorem~\ref{thm:vanishingCH} and Theorem~\ref{thm:plumbing_openbook} and that it also holds true in any dimension provided $L$ is boundary-parallel by direct application of Theorem~\ref{thm:vanishingCH}.

\bibliographystyle{amsplain}
\bibliography{ch_left}

\end{document}